\newtheorem{thm}{Theorem}
\newtheorem{lem}{Lemma}
\newtheorem{cor}{Corollary}
\renewcommand{\Im}{{\rm Im}\,}
\renewcommand{\Re}{{\rm Re}\,}
\newcommand{\cis}{com\-plete inter\-pola\-ting se\-quence}
\renewcommand{\i}{{\rm i}}
\renewcommand{\mod}{{\rm mod}\,}
\begin{document}

\begin{center}
\huge{\bf Complete interpolating sequences, the discrete Muckenhoupt condition, and conformal mapping }
\end{center}
\begin{center}
\rm {\sc  Gunter Semmler}\\[1ex]
Centre of Mathematics,
 Technical University of Munich,\\
Boltzmannstr. 3,
 85747 Garching, Germany 
\end{center}
\vspace{2ex}

\noindent
\begin{small}
We extend the parameterization of sine-type functions in terms of conformal mappings onto slit domains given by Eremenko and Sodin to the more general case of  generating functions  of real  complete interpolating sequences. It turns out that the cuts have to fulfill the discrete Muckenhoupt condition studied earlier by Lyubarskii and Seip. 

\end{small}
\vspace{1ex}

\noindent
Subject Classification (MSC 2000): 42A65,  30D15,  30C20
\vspace{1ex}

\noindent
Key words: complete interpolating sequences, nonharmonic Fourier series,  Riesz basis of exponentials, generating function, interpolation in Paley-Wiener spaces,  discrete Muckenhoupt condition,  sine-type function,

\section{Interpolation in Paley-Wiener spaces and Riesz bases of exponentials}
\label{sec1}
Let $PW_\pi^2$ be the \emph{Paley-Wiener space} of all entire functions of exponential type at most $\pi$  which belong to $L^2$ on the real line.  A sequence $\{\lambda\}_{n\in\mathbb Z}$ of complex numbers is called \emph{\cis}  if for all complex  sequences $\{a_n\}_{n\in\mathbb Z}$ with
\begin{equation}
\label{eq1}
\sum_{n\in\mathbb Z} |a_n|^2{\rm e}^{-2\pi |\Im \lambda_n|}(1+|\Im \lambda_n|)<\infty
\end{equation}
the interpolation problem 
\begin{equation}
\label{interpol}
 f(\lambda_n)=a_n
 \end{equation}
has  a unique solution $f\in PW_\pi^2$.   It is known (see \cite{lyuseip}) that for {\cis}s the function $f$ depends continuously on the  sequence $\{a_n\}$ in  the sense that there are constants $C,c>0$ independent of $\{a_n \}$   such that 
\[c\|f\|_{L^2(\mathbb R)}^2\leq \sum_{n\in\mathbb Z} |a_n|^2{\rm e}^{-2\pi|\Im \lambda_n|}(1+|\Im \lambda_n|)\leq C\|f\|_{L^2(\mathbb R)}^2.\]
If $\{\lambda_n\}\subset \mathbb R$ condition (\ref{eq1}) just means that $\{ a_n\}\in l^2(\mathbb Z)$, and in the sequel we will only consider this case.

A sequence $\{e_n\}$ of vectors in a Hilbert space $H$ is called a  \emph{Riesz basis in $H$}  if it is complete and there are $C,c >0$ such that
\[c\sum |a_n|^2\leq \left \|\sum a_n e_n\right \|^2\leq C\sum |a_n|^2\]
 holds for every finite sequence $\{a_n\}$ of numbers. See \cite{chr} for basic information on this topic and equivalent definitions.  Using the fact that the Fourier transform provides an isometry between $L^2(-\pi,\pi)$ and $PW^2_\pi$, one can characterize {\cis}s as those sequences for which the set $\{{\rm e}^{\lambda_n{\rm i} t}\}_{n\in\mathbb Z}$ is a Riesz basis for $L^2(-\pi,\pi)$, cf. Theorem 9 in chapter 4 of \cite{young}.

 It is also well-known that a sequence $\{e_n\}$ of vectors in a Hilbert space $H$  is a Riesz basis in $H$ if and only if it is an unconditional basis with $0<\inf_n \|e_n\|\leq \sup_n \|e_n\|<\infty $, see \cite{gohkrein},\cite{par}. Therefore, some of our cited references deal with unconditional bases of exponentials. 
 
  The problem to describe all {\cis}s  has been studied since the classical book \cite{paleywiener} of Paley and Wiener. The first type of results in this direction  were perturbation statements of the set of integers, and we provide as an example the famous theorem of Kadets \cite{kad}, see also \cite{young}.
 
 \begin{thm} A sequence $\{\lambda_n\}_{n\in\mathbb Z}$ of real numbers is a {\cis} if 
 \[\sup_{n\in\mathbb Z} |\lambda_n-n|<\frac{1}{4}.\]
 The constant $\frac{1}{4}$ cannot be replaced by any larger number. 
 \end{thm}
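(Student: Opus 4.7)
The plan is to reduce the interpolation statement to a Riesz-basis question for complex exponentials and then apply a Paley--Wiener type perturbation argument, with the fraction $1/4$ emerging from a precise trigonometric estimate. By the equivalence recorded in the paragraph before the theorem, the real sequence $\{\lambda_n\}$ is a {\cis} iff $\{e^{i\lambda_n t}\}_{n\in\mathbb Z}$ is a Riesz basis of $L^2(-\pi,\pi)$. Since $\{e^{int}\}_{n\in\mathbb Z}$ is already an orthonormal basis of this space, I would invoke the Paley--Wiener stability theorem: if the operator $T$ defined on the orthonormal basis by $Te^{int}=e^{i\lambda_n t}$ satisfies $\|I-T\|<1$, then $T$ is invertible and the image system is a Riesz basis equivalent to the standard one. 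It therefore suffices to find $\gamma<1$, depending only on $\delta:=\sup_n|\lambda_n-n|$, such that
\[
\Bigl\|\sum_{n} c_n\bigl(e^{i\lambda_n t}-e^{int}\bigr)\Bigr\|_{L^2(-\pi,\pi)} \le \gamma \Bigl\|\sum_{n} c_n e^{int}\Bigr\|_{L^2(-\pi,\pi)}
\]
for every finitely supported sequence $\{c_n\}$.

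The heart of the proof is Kadets' calculation. Writing $\delta_n=\lambda_n-n$, split
\[
e^{i\lambda_n t}-e^{int}=e^{int}\bigl[(\cos(\delta_n t)-1)+i\sin(\delta_n t)\bigr]
\]
and expand the two trigonometric factors in their Fourier series on $(-\pi,\pi)$; the Fourier coefficients are essentially $(-1)^k\sin(\pi\delta_n)/(\pi(\delta_n^2-k^2))$ and are monotone in $|\delta_n|$ on $[0,1/2]$. Substituting back, applying Parseval, and using a Schur-type bound to control the off-diagonal contributions by their diagonal counterparts, one obtains the target estimate with $\gamma=1-\cos(\pi\delta)+\sin(\pi\delta)$. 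The crucial arithmetic fact is that $\gamma<1$ exactly when $\sin(\pi\delta)<\cos(\pi\delta)$, i.e.\ when $\delta<1/4$, since at $\delta=1/4$ both quantities equal $1/\sqrt{2}$ and the estimate degenerates.

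For sharpness, I would exhibit an Ingham-type example $\lambda_0=0$, $\lambda_n=n-1/4$ for $n\ge 1$, $\lambda_n=n+1/4$ for $n\le -1$: here $\sup|\lambda_n-n|=1/4$ is attained, the generating function factors essentially into shifted sines, and a direct check shows that either (\ref{interpol}) fails to have a solution for some $\{a_n\}\in\ell^2$ or that the homogeneous problem already admits a non-trivial solution, contradicting the definition of {\cis}. I expect the main obstacle to be the middle step: a naive triangle inequality on the Fourier expansion loses the sharp constant (yielding at best $\pi\delta<1$ or similar), so one really has to exploit orthogonality through Parseval and use the precise decay $|\sin\pi\delta_n|/|\delta_n^2-k^2|$ of the coefficients to land exactly on the threshold $1-\cos(\pi\delta)+\sin(\pi\delta)<1$, and hence on $1/4$.
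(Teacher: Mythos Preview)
The paper does not prove this theorem; it is quoted as motivation and attributed to Kadets \cite{kad}, with \cite{young} as a textbook reference. There is therefore no ``paper's own proof'' to compare against. Your outline is exactly the classical argument reproduced in \cite{young}: pass to the Riesz-basis formulation, apply the Paley--Wiener stability criterion, expand $e^{i\delta_n t}$ in the trigonometric system on $(-\pi,\pi)$, and bound term by term to obtain the operator norm $1-\cos\pi\delta+\sin\pi\delta$, which is $<1$ precisely when $\delta<\tfrac14$.

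Two remarks on execution. First, the step you label ``Schur-type'' is in the standard treatment a triangle inequality over the Fourier index $k$, together with the fact that for each fixed $k$ the map $\sum_n c_n e^{int}\mapsto \sum_n c_n e^{int}\cos kt$ (respectively $\sin kt$) has norm $1$ on $L^2(-\pi,\pi)$; the coefficients $A_k(\delta_n),B_k(\delta_n)$ are then bounded uniformly by $A_k(\delta),B_k(\delta)$ using monotonicity in $|\delta_n|$, and the resulting series $\sum_k(|A_k(\delta)|+|B_k(\delta)|)$ evaluates exactly to $1-\cos\pi\delta+\sin\pi\delta$ by substituting $t=\pi$ into the Fourier expansions. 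Calling this Parseval plus Schur is acceptable shorthand, but when you write it up make sure the mechanism is the one above, since a genuine Schur test on the matrix of inner products would not hit the sharp constant. Second, your sharpness sketch is the weakest part: ``a direct check shows'' is not a proof. The cleanest route is not via the generating function but to exhibit explicitly a non-zero $f\in L^2(-\pi,\pi)$ with $\int_{-\pi}^{\pi} f(t)e^{-i\lambda_n t}\,dt=0$ for all $n$ in the Ingham sequence, which shows incompleteness and hence failure of the Riesz-basis property; this is what \cite{young} does. Both points are details to fill in rather than gaps in the strategy.
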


The full description of all {\cis}s was  obtained by Pavlov \cite{pav}, who proved the following theorem. 
 \renewcommand{\labelenumi}{{\rm (\roman{enumi})}}
  \begin{thm}
 \label{pavlov}
 A sequence $\{\lambda_n\}_{n\in\mathbb Z} $ of real numbers is a \cis{}  if and only if
\begin{enumerate}
\item it is \emph{ separated}, i.e. 
\begin{equation}
\label{sep}
\delta:=\inf_{n\neq m}|\lambda_n-\lambda_m|>0,
\end{equation}
\item the limit\footnote{If one of the numbers $\lambda_n$ is equal to zero, the corresponding factor has to be replaced by $z$. This convention will also be adopted in what follows. } 
\begin{equation}
\label{gf}
F(z)=\lim_{R\to\infty} \prod_{|\lambda_n|< R}\left(1-\frac{z}{\lambda_n}\right)
\end{equation}
exists uniformly on compact subsets of $\mathbb C$ and defines an entire function $F$ of exponential type $\pi$, the \emph{generating function},
\item  the function $w(x):=|F(x+iy)|^2, \; x\in \mathbb R$ satisfies the Muckenhoupt condition
\renewcommand{\theequation}{$A_2$}
\begin{equation}
\label{A2}
\int_I w(x)dx \int_I\frac{1}{w(x)}dx\leq C|I|^2
\end{equation}
{\it for some constant $C>0$, some $y\neq 0$ (and hence for all $y\neq 0$), and all intervals $I\subset \mathbb R$ of finite length $|I|$.  }
\end{enumerate}
\renewcommand{\theequation}{\arabic{equation}}
\addtocounter{equation}{-1}
 \end{thm}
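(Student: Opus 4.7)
The plan is to transport the problem through the Fourier--Plancherel isometry $L^2(-\pi,\pi)\cong PW_\pi^2$, so that the Riesz basis characterization quoted in the excerpt lets us work entirely in the Paley--Wiener picture. The natural candidate for the biorthogonal system in $PW_\pi^2$ is $K_n(z):= F(z)/(F'(\lambda_n)(z-\lambda_n))$, provided $F$ exists; these satisfy $K_n(\lambda_m)=\delta_{nm}$ and give a Lagrange-type reconstruction $f(z)=\sum_n f(\lambda_n)K_n(z)$. The proof reduces to showing that $\{K_n\}$ being a Riesz basis of $PW_\pi^2$ is equivalent to conditions (i)--(iii).

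For necessity of (i) and (ii), I would use the CIS hypothesis to produce $K_m\in PW_\pi^2$ with $K_m(\lambda_n)=\delta_{nm}$. A standard reproducing-kernel bound for $PW_\pi^2$ combined with the uniform two-sided control on $\|K_n\|_2$ coming from the Riesz basis inequalities prevents two nodes from coalescing, yielding (\ref{sep}). Given separation, the finite upper density of $\{\lambda_n\}$ implies $\sum 1/(1+\lambda_n^2)<\infty$ and hence convergence of the canonical product (\ref{gf}). The exponential type of $F$ is exactly $\pi$: the upper bound follows from a Phragm\'en--Lindel\"of / Jensen-type estimate based on the $PW_\pi^2$-membership of $K_n$, and the lower bound from Landau-type density arguments, since $\{e^{\i\lambda_n t}\}$ cannot be complete and minimal in $L^2(-\pi,\pi)$ unless the density of $\{\lambda_n\}$ equals $1$.

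The crucial step is the equivalence between ``$\{K_n\}$ is a Riesz basis of $PW_\pi^2$'' and ``$|F(x+\i y)|^2\in A_2$''. Here I would factor $F = F^+\cdot F^-$ into outer functions in the upper and lower half-planes respectively (both nonvanishing off $\mathbb R$ since the zeros of $F$ lie on the real axis), and decompose each $f\in PW_\pi^2$ as $f=F\cdot g$, where $g$ is meromorphic with simple poles at $\lambda_n$ and residues $f(\lambda_n)/F'(\lambda_n)$. Computing $\|f\|_2^2$ via the Hardy-space boundary values of $F^{\pm}$ rewrites it as a weighted $L^2$-norm of the principal-part sum of $g$, with weight $|F(x)|^2$. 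The Riesz basis inequalities then amount to the equivalence of this weighted norm with $\sum|f(\lambda_n)|^2$, which after separating the half-plane Cauchy projections translates precisely to boundedness of the Hilbert transform on $L^2(|F|^2)$. Muckenhoupt's theorem identifies this boundedness with the $A_2$ condition, giving (iii). Sufficiency runs the chain backwards, using the $w\mapsto 1/w$ symmetry of $A_2$ to recover the lower Riesz bound.

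The main obstacle is the last chain of equivalences, namely the transition from interpolation inequalities on the exponential side to a Hilbert-transform estimate on a weighted $L^2$ space with weight $|F|^2$. This hinges on (a) the Hardy-space factorization of $F$ via outer functions in the two half-planes, (b) a delicate partial-fraction / Cauchy-integral representation of $f/F$, and (c) invoking Muckenhoupt's theorem as the precise criterion for boundedness of the Hilbert transform on $L^2(w)$. Once this bridge is in place, the remaining pieces (separation, convergence of the product, sharpness of type $\pi$) are comparatively routine.
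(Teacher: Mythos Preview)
The paper does not prove Theorem~\ref{pavlov}; it quotes it as a known result due to Pavlov~\cite{pav} (with later extensions by Nikolskii, Minkin, Sedletskii, and Lyubarskii--Seip) and then uses it as a black box in the proof of the paper's own Theorem~\ref{charthm}. There is therefore no proof in the paper to compare your proposal against.

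That said, your sketch is a recognizable outline of the classical argument. The reduction to boundedness of the Hilbert transform on a weighted $L^2$ space, followed by an appeal to the Hunt--Muckenhoupt--Wheeden characterization via the $A_2$ condition, is exactly how Pavlov's theorem is usually established. A few points where your outline is thin: the outer factorization of $F$ in both half-planes requires knowing in advance that $\log|F|$ is Poisson-integrable on horizontal lines (essentially that $F$ is in the Cartwright class), which you have not justified; and the passage from the interpolation inequalities to a genuine Hilbert-transform estimate needs more than ``separating the half-plane Cauchy projections'' --- one has to control the discrete-to-continuous passage carefully, typically via Carleson-type embedding or via the model-space machinery in Nikolskii's book. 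These are not fatal gaps, but they are where the real work lies, and your proposal acknowledges as much in its final paragraph.
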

This formulation of the theorem is specialized for real $\lambda_n$, whereas \cite{pav} contains conditions even if the $\lambda_n$ are in some half plane $\{ \Im z\geq h\}$.
Later  this restriction was completely disposed of by Nikolskii \cite{nik} and Minkin \cite{min}. 

Because of the separation condition (\ref{sep}) we can in the following always assume that the $\{\lambda_n\}$ are ordered, i.e. $\lambda_{n+1}>\lambda_n$ for all $n\in\mathbb Z$. Besides, it is also well-known (cf. \cite{lyuseip}) that a (real) {\cis} is \emph{relatively dense}, i.e. for some $\varepsilon>0$ the sets $\{\lambda_n\}_{n\in\mathbb Z}\cap [x-\varepsilon, x+\varepsilon]$ are nonempty for every $x\in\mathbb R$. In particular, 
\begin{equation}
\label{eq2}
\Delta:=\sup_{n\in\mathbb Z} |\lambda_{n+1}-\lambda_n|<\infty.
\end{equation}

Theorem \ref{pavlov} has been deduced in a different manner and generalized by Sedletskii \cite{sed1},  \cite{sed2} and Lyubarskii and Seip \cite{lyuseip}. In the latter paper it is shown that condition (iii) in this theorem can be replaced by 
\begin{enumerate}
\item[(iii')] {\it There is a relatively dense subsequence $ \{\lambda_{n_k}\}_{k\in\mathbb Z}$ such that the numbers $d_k:= |F'(\lambda_{n_k})|^2$ satisfy the \emph{discrete Muckenhoupt condition}}
\renewcommand{\theequation}{$\tilde{A}_2$}
\begin{equation}
\label{A2t}
\sum_{n\in I} d_n \sum_{n\in I} d_n^{-1}\leq C|I|^2
\end{equation}
{\it for a constant $C>0$ and all finite sets $I$ of consecutive integers containing $|I|$ elements. }
\end{enumerate}
\renewcommand{\theequation}{\arabic{equation}}
\addtocounter{equation}{-1}

Checking the Muckenhoupt condition (\ref{A2}) for a function $F$ given by an infinite product (converging in the Cauchy principal value sense) is practically quite hard. Condition (\ref{A2t}) is already easier to verify since it involves only countably many sets $I$ instead of all finite intervals.

The goal of this paper is to give an alternative characterization of {\cis}s. In contrast to existing characterizations our result can be regarded as a parameterization of the set of {\cis}s by independent parameters. Once this is done we can easily give discrete analogues of well-known sufficient conditions for such sequences.    As a by-product we obtain a different way to represent the generating function. In order to motivate our construction we revise in the next section the concept of sine-type functions.

\section{Sine-type functions}
An entire function $F$ of exponential type is called a \emph{sine-type function} (of type $\sigma$) if
\begin{enumerate}
\item $F$ has exponential type $\sigma\;(0<\sigma<\infty)$ in each of the half-planes $\mathbb C_+:=\{\Im z>0\}$ and $\mathbb C_-:=\{\Im z<0\}$,
\item the zeros $\{\lambda_n\}_{n\in\mathbb Z} $ of $F$ are located in some strip $\{|\Im z|\leq h\}$ and satisfy the separation condition (\ref{sep}),
\item for some $y\neq 0, \;C,c>0$ and all $x\in\mathbb R$ holds 
$c\leq |f(x+{\rm i}y)|\leq C$.
\end{enumerate}

This definition goes back to Levin \cite{lev61},  \cite{lev69}.  Together with Golovin \cite{gol} he proved

\begin{thm}
\label{levinthm}
The zeros $\{\lambda_n\}_{n\in\mathbb Z}$ of a function of sine-type $\pi$ form a {\cis}. 
\end{thm}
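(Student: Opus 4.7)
The plan is to verify the three conditions of Pavlov's Theorem~\ref{pavlov} directly for the zero sequence $\{\lambda_n\}_{n\in\mathbb Z}$ of the given sine-type function, which I denote $\tilde F$ to distinguish it from the generating function $F$ appearing in~(\ref{gf}). Separation (i) is immediate from part~(2) of the sine-type definition. For condition (iii), the key observation is that part~(3) of the sine-type definition states exactly that $|\tilde F(x+\i y)|^2$ is bounded between positive constants for some $y\neq 0$, which trivializes $A_2$. The substantive work is therefore in condition (ii), where one has to show that (\ref{gf}) converges and coincides up to a multiplicative constant with $\tilde F$, so that the horizontal-line boundedness transfers from $\tilde F$ to $F$.

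To establish (ii), my first step is the counting-function asymptotic $n(R)=2R+O(1)$. Since the zeros lie in a horizontal strip, this can be extracted from the indicator diagram of $\tilde F$ (which is forced to be $\pi|\sin\theta|$ by parts~(1) and~(3) of the definition) via Jensen's formula, and it already appears as the relative density noted right after Theorem~\ref{pavlov}. Combined with separation this yields $\sum 1/\lambda_n^2<\infty$, so the principal-value product (\ref{gf}) converges uniformly on compacts to an entire function $F$ of exponential type at most $\pi$. Because $F$ and $\tilde F$ share the same zero set, Hadamard factorization forces $F(z)=e^{\alpha z+\beta}\tilde F(z)$ for some constants $\alpha,\beta$. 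Standard estimates show that a canonical product over a zero sequence with $n(R)=2R+O(1)$ lying in a strip has indicator $\pi|\sin\theta|$, matching $\tilde F$; hence the indicator of $e^{\alpha z+\beta}$ vanishes identically, which forces $\alpha=0$. Consequently $F=e^\beta\tilde F$ and (iii) follows as explained above.

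An alternative closing via the Lyubarskii--Seip reformulation (iii') is to note that once $F=e^\beta\tilde F$ is known, a Cauchy-integral estimate on a circle of radius $\delta/2$ around each $\lambda_n$, using the separation constant $\delta$ from (\ref{sep}) together with the horizontal-line bounds of $\tilde F$, gives $|F'(\lambda_n)|\asymp 1$ uniformly in $n$; then $d_n=|F'(\lambda_n)|^2$ is bounded above and below and $\tilde A_2$ holds trivially on the full sequence. The main obstacle in this plan is the identification $F=e^\beta\tilde F$: excluding a nontrivial exponential $e^{\alpha z}$ with $\Re\alpha\neq 0$ is essential, since such a factor would introduce a $e^{\Re(\alpha)x}$ drift in $F$ and destroy the Muckenhoupt condition on long intervals. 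Executing the indicator computation for the canonical product rigorously enough to pin $\alpha$ down to zero is the only real analytical hurdle.
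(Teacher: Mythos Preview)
Your plan is sound. The paper does not prove Theorem~\ref{levinthm} directly---it is quoted from Levin and Golovin---but in Section~4 it observes that the result falls out of the paper's main Theorem~\ref{charthm}: by the Eremenko--Sodin criterion (Theorem~\ref{eresodthm}) a real sine-type function has critical values bounded away from $0$ and $\infty$, so $d_n=c_n^2$ satisfies~(\ref{A2t}) trivially, and Theorem~\ref{charthm} then gives the \cis{} conclusion. Your route via Pavlov's Theorem~\ref{pavlov} is the classical one the paper also mentions in passing (``sine-type functions are easily seen to fulfil the Muckenhoupt condition~(\ref{A2})''); it is more self-contained and avoids the conformal-mapping machinery, whereas the paper's route is there to exhibit Theorem~\ref{levinthm} as a special case of the new criterion. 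On the step you flag as the main hurdle: since $\tilde F$ is bounded on a horizontal line it lies in the Cartwright class, whose factorization theorem already yields $\tilde F(z)=Ce^{i\gamma z}F(z)$ with $\gamma\in\mathbb R$; the sine-type hypothesis of equal type $\pi$ in \emph{both} half-planes, combined with the fact that the principal-value product over real zeros has symmetric growth in $\mathbb C_\pm$, forces $\gamma=0$ without a detailed indicator computation (and even if $\gamma\neq 0$, the factor $|e^{i\gamma(x+iy)}|=e^{-\gamma y}$ is constant in $x$, so~(\ref{A2}) would transfer anyway). Finally, note that Theorem~\ref{pavlov} as formulated here is for real $\lambda_n$, so your argument as written covers only real-zero sine-type functions; for zeros in a strip one needs the Levin--Ostrovskii reduction cited just before Theorem~\ref{eresodthm}.
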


An exposition of this result and some further developments can be found in \cite{lev96} and \cite{young}. Though not every {\cis} coincides with the zeros of some sine-type function, the introduction of this notion was not too far away from the exhaustive characterization in Theorem \ref{pavlov}, as the following theorem of Avdonin and Jo\'{o} \cite{avdjoo}  shows.

\begin{thm}
If $\{\lambda_n\}_{n\in \mathbb Z} \subset \mathbb R$  is a complete interpolating sequence then there is $d\in (0,\frac{1}{4})$ and a function $F$ of sine-type $\pi$ and with its zeros in $\{\mu_n\}_{n\in\mathbb Z}$ such that
\[d(\lambda_{n-1}-\lambda_n)\leq \mu_n-\lambda_n\leq d(\lambda_{n+1}-\lambda_n)\]
for all $n\in\mathbb Z$. 
\end{thm}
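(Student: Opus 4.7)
The plan is to construct the sine-type function as a canonical product $G(z)=\lim_{R\to\infty}\prod_{|\mu_n|<R}(1-z/\mu_n)$, where $\mu_n=\lambda_n+t_n$ lies in the prescribed window $t_n\in[d(\lambda_{n-1}-\lambda_n),d(\lambda_{n+1}-\lambda_n)]$, and to choose the shifts so that $|G(x+\i y)|$ is pinched between two positive constants for some fixed $y\neq 0$. Since $\{\lambda_n\}$ is separated and relatively dense by (\ref{sep}) and (\ref{eq2}), any $d<1/4$ keeps the perturbed $\mu_n$ separated (adjacent gaps at least $(1-2d)(\lambda_{n+1}-\lambda_n)$) and preserves relative density, so $\{\mu_n\}$ is at least a candidate for a \cis.

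My first step is to reformulate the sine-type requirement via criterion (iii') of Lyubarskii-Seip: it is enough to produce $\mu_n$ so that the weights $|G'(\mu_n)|^2$ are pinched between positive constants, since uniformly bounded weights trivially satisfy (\ref{A2t}), and a standard Levin-type argument then upgrades this to the uniform boundedness of $|G(x+\i y)|$ that defines sine-type. My second step is a compactness/continuity construction of the shifts: on the truncated box $\prod_{|n|\leq N}[d(\lambda_{n-1}-\lambda_n),d(\lambda_{n+1}-\lambda_n)]$, use an implicit-function or degree argument to balance the finitely many weights $|G_N'(\mu_n)|^2$, then extract a normal-families limit as $N\to\infty$. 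The $A_2$ bound on $|F(x+\i y)|^2$ from Pavlov's theorem should provide the a priori estimates that make this limit well-defined.

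The heart of the matter is showing that the windows offer enough freedom to flatten the weights. An infinitesimal shift of $\lambda_n$ by a fraction $\alpha$ of the adjacent gap should change $\log|F'(\lambda_n)|^2$ by an amount roughly proportional to $\alpha$, so for $d<1/4$ the window allows any multiplicative correction in a definite bounded range. The sharp constant $1/4$ is no coincidence: combined with Theorem \ref{levinthm}, Kadets' theorem says that any $1/4$-perturbation of a sine-type zero set is a \cis, and the statement to be proved is exactly the converse existence result. The main obstacle I expect is the global coupling of the $t_n$ through the infinite product $G$: the $\tilde A_2$ control on the $d_n=|F'(\lambda_n)|^2$ is only averaged, whereas the sine-type property demands uniform pointwise bounds, so one will need the reverse-H\"older structure of Muckenhoupt weights to convert averaged information into uniform control along the perturbed sequence.
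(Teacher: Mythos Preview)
The paper does not prove this theorem at all; it is quoted from Avdonin and Jo\'{o} \cite{avdjoo} purely as context, to motivate the later conformal-mapping characterization. So there is no ``paper's own proof'' to compare against, and your proposal has to stand on its own.

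As written, it does not. What you have is a strategy memo, not a proof: every substantive step is announced rather than carried out. The crux is the existence of shifts $t_n$ in the prescribed windows that flatten the weights $|G'(\mu_n)|^2$, and for this you offer only ``an implicit-function or degree argument'' on finite truncations followed by a normal-families limit. Nothing here is executed. You would need, at minimum, (a) a precise finite-dimensional map whose zero set encodes the balancing condition, (b) a degree or fixed-point computation showing it has a solution inside the box, (c) uniform-in-$N$ a priori bounds so that the diagonal limit actually produces a sequence with bounded weights rather than merely $\tilde A_2$ weights, and (d) an argument that the limiting canonical product still has type exactly $\pi$. You flag (c) yourself as ``the main obstacle I expect,'' which is an honest admission that the proof is not yet in hand.

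There is also a gap in your reduction. You claim that uniformly bounded $|G'(\mu_n)|^2$ ``trivially satisfy (\ref{A2t}), and a standard Levin-type argument then upgrades this to the uniform boundedness of $|G(x+\i y)|$.'' The first clause is fine; the second is not standard and needs work. Condition (iii') with bounded weights yields only that $\{\mu_n\}$ is a \cis, which is weaker than sine-type. One route that \emph{would} close this gap uses the machinery developed later in this very paper: once $\{\mu_n\}$ is a \cis, Lemma~\ref{equivlemma} gives $|c_n|\asymp|G'(\mu_n)|$, hence bounded critical values, and then Theorem~\ref{eresodthm} of Eremenko--Sodin delivers sine-type. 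But you should say this explicitly rather than invoke an unspecified ``Levin-type argument.''

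Finally, your heuristic that a shift by a fraction $\alpha$ of the gap changes $\log|F'(\lambda_n)|^2$ by something ``roughly proportional to $\alpha$'' is exactly the kind of statement that needs the reverse-H\"older structure you mention, and it is the place where the sharp constant $1/4$ must enter quantitatively. Without a concrete estimate here, the window $d<1/4$ is unmotivated and the degree argument has no boundary control.
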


As noticed by Levin and Ostrovskii \cite{levostr1}, a parametric description of all sine-type functions is desirable, and this problem was completely solved by Eremenko and Sodin \cite{eresod}. Since their construction will be of importance to us later, we describe it in some detail. First we recall that Levin and Ostrovskii \cite{levostr2} proved that a sequence $\{\lambda_n\}_{n\in\mathbb Z}$  with $|\Im \lambda_n|\leq h$ is the zero sequence of a sine-type function if and only if  $\{\Re \lambda_n\}_{n\in\mathbb Z}$ is so.  We can therefore restrict our considerations to sine-type functions $F$ with real zeros. Such functions belong to the \emph{Laguerre-P\'{o}lya class} ${\cal LP}$  of real entire functions approximable by real polynomials with real zeros uniformly  on every compact subset of $\mathbb C$.  We may number the critical points of a function $F\in{\cal LP}$  in increasing order and counting multiplicities, so  that the indices of all points of maximum are even. Let $c_n$   be the value of $F$ at the $n$-th critical point and denote by Cr\,$F:=\{\ldots, c_{-1}, c_0, c_{1}, \ldots\}$ the set of all critical values. If Cr\,$F$ is finite from the left (from the right) and $c_m$ is its first (last) member then we set $c_{m-1}=\lim_{x\to -\infty} f(x)$ ($c_{m+1}=\lim_{x\to +\infty} f(x)$), which is allowed to be infinite. The sequence Cr\,$F$ is alternating, i.e. $(-1)^nc_n\geq 0$.  

For an arbitrary alternating sequence $s=\{c_n\}$ a function $F\in {\cal LP}$ with Cr\,$F=s$ can be constructed in the following way. Define a simply connected comb-like domain
\[\Omega(s):=\mathbb C\setminus \bigcup_n \{ z=x+{\rm i}n\pi\colon -\infty<x\leq \log |c_n|\}.\]
If there is an  infinite  first (last) member $c_m$ of the sequence  we consider the domain lying in  the halp plane $\{\Im z>(m-1)\pi\}$ ($\{\Im z<(m+1)\pi\}$). Let $\varphi:\mathbb C_-\to \Omega(s)$ be a conformal map of the lower half-plane onto $\Omega (s)$ such that $\Re \varphi({\rm i}y)\to \infty $ as $y\to-\infty$. Then the function
\begin{equation}
\label{expf}
F(z)={\rm e}^{\varphi (z)}
\end{equation}
can be extended onto $\mathbb C$ by the Schwarz reflection principle and it can be shown that $F\in {\cal LP}$ and Cr\,$F=s$. Conversely, every function $F\in {\cal LP}$ has the representation (\ref{expf}) with $\varphi$ a conformal map of $\mathbb C_-$ onto $\Omega ({\rm Cr}\, F)$.  Eremenko and Sodin \cite{eresod}  specify now conditions on the sequence $\{c_n\}$ in order that the function (\ref{expf}) is of sine type.

\begin{thm}
\label{eresodthm}
A real entire function $F$ is of sine-type if and only if it can be represented in the form {\rm (\ref{expf})}  where $\varphi:\mathbb C_-\to\Omega (s)$ is a conformal map with $\Re \varphi (\i y)\to \infty $ as $y\to-\infty$, and the sequence $s= \{c_n\}_{n\in \mathbb Z}={\rm Cr}\,F$ satisfies 
\begin{equation}
\label{erecond}
c\leq |c_n|\leq C\qquad \forall n\in \mathbb Z
\end{equation}
with some constants $C,c>0$. 
\end{thm}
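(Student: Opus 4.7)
The plan is to prove the two implications of Theorem \ref{eresodthm} separately, building on the preceding identification $F\in{\cal LP}\iff F=e^\varphi$ with $\varphi\colon\mathbb C_-\to\Omega({\rm Cr}\,F)$ conformal, together with the Levin--Ostrovskii reduction to real zeros. A function of exponential type $\pi$ in each half-plane is of sine-type precisely when $|F|$ is bounded above and below on some horizontal line $\Im z=-y_0$, and the task is to match this condition with $c\le|c_n|\le C$.

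For necessity, assume $F$ is of sine-type. The zeros may be taken real, so $F\in{\cal LP}$ and the critical points $t_n$ lie on $\mathbb R$ interleaved with the zeros $\{\lambda_n\}$. The bound $|c_n|\le C$ follows from Phragmén--Lindelöf in the strip $|\Im z|\le y_0$: on the two boundary lines $|F|\le C$, and combined with type $\pi$ this yields a uniform bound on $\mathbb R$, hence on $c_n=F(t_n)$. For the lower bound one uses the standard fact that sine-type implies $|F'(\lambda_n)|\asymp 1$ (obtained by applying the Cauchy integral formula to $F$ on a small circle around $\lambda_n$, whose boundary lies in the region where $|F|$ is already controlled); the mean value theorem then gives $|F(t_n)|=|F'(\xi_n)|\cdot|t_n-\lambda_n|$ for some $\xi_n$ in the bounded interval $(\lambda_n,t_n)$, and a quantitative refinement together with the bounded separation $|\lambda_{n+1}-\lambda_n|\in[\delta,\Delta]$ yields $|c_n|\ge c'>0$.

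For sufficiency, assume $c\le|c_n|\le C$ and set $F=e^\varphi$. The heart of the argument is the asymptotic
\[
\varphi(z)=\i\pi z+O(1)\qquad\text{as }\Im z\to-\infty,\text{ uniformly in }\Re z.
\]
The leading coefficient $\i\pi$ is forced by the geometry: the strips between consecutive slits of $\Omega(s)$ all have height $\pi$, and the normalisation $\Re\varphi(\i y)\to\infty$ as $y\to-\infty$ is consistent with $\Re(\i\pi\cdot\i y)=-\pi y\to+\infty$. Uniformity of the $O(1)$ remainder is the crux, and it follows from the uniform confinement of the slit tips to the vertical strip $\log c\le\Re w\le\log C$: far to the right of this strip $\Omega(s)$ coincides with the half-plane $\{\Re w>\log C\}$, and Koebe-distortion / conformal-modulus estimates then show that $\varphi$ differs from the explicit conformal map onto this half-plane by a bounded correction, uniformly in $\Re z$. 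The asymptotic immediately delivers both $|F(z)|=e^{\pi|\Im z|+O(1)}$, exhibiting exponential type $\pi$ in $\mathbb C_-$, and the two-sided bound $|F(x-\i y_0)|\asymp e^{\pi y_0}$ on any sufficiently deep horizontal line, so $F$ is of sine-type.

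The most delicate step in both directions is the transfer between the vertical confinement of the slit tips of $\Omega(s)$ and horizontal estimates on $F$ in $\mathbb C_-$. A unifying tool is a normal-families argument: after a three-point normalisation, the family of conformal maps $\varphi$ arising from sequences $s$ with $c\le|c_n|\le C$ is precompact in locally uniform convergence on $\mathbb C_-$, and every subsequential limit still maps $-\i\infty$ to the far right of a comb domain with the same bounds on its tips; hence uniform estimates that are classical in the model case $c_n=(-1)^n$ (where $F_0(z)=\sin\pi z$ and $\varphi_0(z)=\i\pi z-\log 2-\i\pi/2+o(1)$) propagate to the whole family by compactness.
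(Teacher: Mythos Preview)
The paper does not prove Theorem~\ref{eresodthm}; it is quoted from Eremenko and Sodin \cite{eresod}. However, the proof of Theorem~\ref{charthm} in the paper explicitly adapts the Eremenko--Sodin arguments, so one can compare your approach against those techniques.

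Your sufficiency direction has a genuine gap: you do not verify condition (ii) of the definition of sine-type, namely that the zeros $\{\lambda_n\}$ are \emph{separated}. Your asymptotic $\varphi(z)=\i\sigma z+O(1)$ (the coefficient is some $\sigma>0$, not necessarily $\pi$, since $\varphi$ is only determined up to $z\mapsto az+b$) concerns $\varphi$ deep in $\mathbb C_-$ and says nothing about the boundary behaviour of $\varphi^{-1}$ near the slit ends at $-\infty$. Separation is not automatic from conditions (i) and (iii)---indeed the paper notes that \cite{eresod} treats the non-separated case separately. In the paper's proof of the more general Theorem~\ref{charthm}, separation occupies an entire lemma (Lemma~2) and relies on the Gehring--Hayman theorem, harmonic-measure estimates, and module comparisons to control the positions of the critical points $x_n=\varphi^{-1}(\log|c_n|+\i n\pi)$ and then the zeros between them. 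Your closing normal-families paragraph could in principle be developed into such an argument (using translation-invariance of the hypothesis $c\le|c_n|\le C$ together with Carath\'eodory kernel convergence of the comb domains), but as written it is aimed at the asymptotic of $\varphi$, not at separation, and the precompactness claim with an unspecified three-point normalisation is not yet a proof.

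A smaller issue: in the necessity direction, your mean-value-theorem step $|c_n|=|F'(\xi_n)|\cdot|t_n-\lambda_n|$ does not give a lower bound, since $\xi_n$ may lie near the critical point $t_n$ where $F'$ vanishes, and $|t_n-\lambda_n|$ is not bounded below. A clean fix is to use the standard fact for sine-type functions that $|F(x)|\ge c'$ whenever $\mathrm{dist}(x,\{\lambda_n\})\ge\delta/2$; evaluating at the midpoint of $[\lambda_{n-1},\lambda_n]$ then gives $|c_n|\ge c'$ since $|c_n|$ is the maximum of $|F|$ on that interval.
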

In fact, \cite{eresod} contains a more general statement that even characterizes sine-type functions where the separability condition (\ref{sep}) in the definition is omitted, but we do not  need this since in view of condition (i) in Theorem \ref{pavlov} we will only be concerned with separated
sequences. Parameterizations of classes of entire functions by conformal mappings onto certain slit  domains have a long history connected with the names of MacLane, Vinberg, Marchenko and Ostrovskii, see the references in \cite{eresod}. 

\section{Characterization of  {\cis}s}
 For any real {\cis} $\{\lambda_n\}_{n\in\mathbb Z}$ the generating function (\ref{gf}) belongs to the Laguerre-P\'{o}lya class ${\cal LP}$ and has therefore a representation (\ref{expf}) with a conformal map $\varphi: \mathbb C_-\to \Omega{(s)}$. It is therefore natural to ask for conditions on the sequence $s=\{c_n\}$ that characterize real {\cis}s. Clearly, such conditions must be more general than (\ref{erecond}).

 The crucial point is the Muckenhoupt condition (\ref{A2}), 
  and in order to develop an idea we remark that any finite interval $I$ can be decomposed into subintervals say $I_p,\ldots, I_q$ such that the segment $I_n+{\rm i}y$ ($y<0$ fixed) is mapped by $\varphi$ near the $n$-th endpoint of the cuts of $\Omega(s)$, i.e.
  \[ \varphi (x+{\rm i}y)\approx \log |c_n|+ \i n\pi,\qquad x\in I_n, n=p,\ldots, q,\]
  see Figure \ref{picture1}.

\begin{figure}[h]
\begin{center}

\begin{tikzpicture}[scale=1.1]
\begin{scope}[>=latex]
%links
\fill[black!15!white]  (-3, 1) rectangle (3,-3) node[pos=0.9, color=black]{\small $\mathbb C_-$};
\draw[->]  (-3, 1) -- (3,1) node[anchor=south]{\scriptsize Re};
\draw(-3,-1)--(3,-1);
\draw[->] (-2.5, -3) -- (-2.5, 3) node[anchor=west]{\scriptsize Im};
%Striche oben
\draw (-2.0, 1.1) -- (-2.0, 0.9) node[anchor=north] {\scriptsize $\lambda_p$};
\draw (-1.0, 1.1) -- (-1.0, 0.9) node[anchor=north] {\scriptsize $\lambda_{p+1}$};
\draw (0.7, 1.1) --(0.7, 0.9) node[anchor=north] {\scriptsize $\lambda_{q-1}$};
\draw (1.5, 1.1) -- (1.5, 0.9)node[anchor=north] {\scriptsize $\lambda_q$};
\draw (2.5, 1.1) -- (2.5, 0.9)node[anchor=north] {\scriptsize $\lambda_{q+1}$};
%dicke Linien
\draw[very thick, red] (-1.8,-1)--(-1.1,-1) node[midway, below]{\scriptsize $I_p+\i y$};
\draw[very thick, red](0.7,-1)--(1.6,-1)node[midway, above]{\scriptsize $I_{q-1}+\i y$};
\draw[very thick, red] (1.6,-1)--(2.6,-1) node[midway, below]{\scriptsize $I_q+\i y$};
%Striche unten
\draw (-1.8, -1.1) -- (-1.8, -0.9);
\draw (-1.1, -1.1) -- (-1.1,- 0.9);
\draw (0.7, -1.1) --(0.7, -0.9);
\draw (1.6, -1.1) -- (1.6, -0.9);
\draw (2.6,- 1.1) -- (2.6, -0.9);

\begin{scope}[xshift=-2cm]
%rechts
\fill[black!15!white] (6,-3) rectangle (12.5, 3) node[color=black,pos=0.9]{\small $\Omega (s)$};
\draw[->] (6,-2.7) -- (12.2,-2.7) node[above]{\scriptsize Re};
\draw[->] (6.4, -3) --(6.4, 3) node[right]{\scriptsize Im}; 

%Kurve 
\begin{scope}[xshift=8cm, yshift=5.3cm];
\begin{scope}[rotate=-90, scale=1.6 ]
\draw[smooth] plot coordinates {(1.4374351013657951 ,  0.51401549989056272)  (1.5361863196469436,   0.59253684942805074)
(   1.6167524942955553  , 0.67935556334084091)
  ( 1.6791287100580834 ,  0.765906524610951)
 (  1.7272531648401537 ,  0.846936880518840)
 (  1.7652816992602585,  0.920546262747710)
 (  1.7963100557161231 ,  0.986526040303256)
  ( 1.822435937392788 ,  1.0452777382910370)};
\draw[very thick, red, smooth] plot coordinates {(  1.8224359373927888 , 1.045277738291037)
  ( 1.845067364214854  , 1.097346754059864)
  ( 1.8651708256655721 ,  1.14325299198707)
  ( 1.883433644192128  , 1.1834387204275574)
  ( 1.900365863295388,  1.218258751027608)
   (1.9163650058853743 ,  1.2479842282325972)
 (  1.9317590019288833  , 1.2728093514136121)
  ( 1.9468367081187159 ,  1.2928571729365308)
  ( 1.9618720089807882  , 1.3081831557488677)
  ( 1.9771457523581129 , 1.3187761015922608)
  ( 1.9929692080605363 ,  1.324556430488256)
  ( 2.0097131634965968 ,  1.325372110019501)
  ( 2.0278483321476379,  1.320993251082972)
  ( 2.0480059712454692 ,  1.311108336564063)
  ( 2.0710734362123548  , 1.2953302660520269)
  ( 2.0983488607122336 ,  1.2732341996809775)
  ( 2.1317902761968122  , 1.244485440066544)
  ( 2.1743861998566247  , 1.2092076774483023)
  ( 2.2305380213440076 ,  1.168943909975855)
  ( 2.3056859001422230 , 1.12876898756593)
  ( 2.4027381914586181 , 1.09998671003149)
  ( 2.5138945493189091 , 1.0973439554801918)
  ( 2.6197768563026056 ,  1.1251394663146650)
  ( 2.7058146030667132 , 1.1716934338159)
  ( 2.7707945031629020 , 1.2229021669995115)
  ( 2.8197184005709084 , 1.270870065295753)
  ( 2.8576722719744074 ,  1.3126538653726061)
  ( 2.8883213008244777  , 1.3475785665908060)
  ( 2.9141031875415639 , 1.3757803847935428)
  ( 2.9366416112877434   ,1.3976099888305717)
  ( 2.9570577717301516,  1.4134133472354693)
  ( 2.9761716990760361 , 1.4234579381325327)
  ( 2.9946311833684272 , 1.4279113752369150)
  ( 3.0129997427210995 ,  1.4268370423954808)
  ( 3.0318250687805426 , 1.420193670177026)
  ( 3.0517036835413238 ,  1.407834874893236)
  ( 3.0733563775502648  , 1.3895097438808006)
  ( 3.0977322248341177,  1.3648717276541389)
  ( 3.1261668498161725  , 1.3335166659111459)
  ( 3.1606310735923078 ,  1.295105425217764)
  ( 3.2040996757699496  , 1.2497145565453867)
  ( 3.2609456433016213  , 1.1987538452309885)
  ( 3.3366500137075157 ,  1.1470092944274175)
  ( 3.4344827752072558  , 1.1053880963383291)};
    \node[color=red] at (2.4,1.8) {\scriptsize $\varphi (I_q+\i y)$}; 
 \node[color=red] at (3.3,1.9) {\scriptsize $\varphi (I_{q-1}+\i y)$}; 
\draw[smooth] plot coordinates    {(3.4344827752072558 ,  1.1053880963383291)
 (  3.5474437722932288  ,1.088660888980066)
 (  3.656534530021926 , 1.101925897452975)
 (  3.7466406879486325  , 1.1339849024485991)
 (  3.8160989494576536  , 1.1704044836609020)
 (  3.869962025323492  , 1.2027931647415819)
 (  3.913617272011930  , 1.2278267150113289)
 (  3.951145495476087  , 1.2445166804492545)
 (  3.9855291432408220 , 1.25268686436115)
 (  4.0191518846986440  , 1.25235090889164)
 (  4.0542561925979719  , 1.243515267209409)
 (  4.0933774211171539  , 1.226216844496429)
 (  4.1398217229899688 , 1.200832965359049)
 (  4.1981392369695962 , 1.1689494049636731)
 (  4.273967963000052  , 1.1353290279231358)
 (  4.371029450986588 , 1.110714639362754)
 (  4.483238599733396  , 1.1101935504547709)
 (  4.5919845516442717  , 1.1401305623217306)
 (  4.6813588573329437  , 1.1902868305141232)};
\draw[very thick, red, smooth] plot coordinates   {( 4.6813588573329437 ,  1.19028683051412)
(   4.7488977589371046 , 1.246259604261888)
 (  4.7993367359493302  , 1.2995344144148990)
  ( 4.83789212837148  , 1.3468782415846468)
  ( 4.868381714709805  , 1.38757410514576)
  ( 4.893333685017062 , 1.421831387682976)
  ( 4.91440069532537  , 1.450124965187161)
  ( 4.932681820238216  , 1.47295323837021)
  ( 4.948928599519156  , 1.49075940028944)
  ( 4.963671304192025  , 1.5039127044044784)
  ( 4.9772964956965264  , 1.5127103680226970)
  ( 4.990095615241791  , 1.5173854195879195)
  ( 5.002296275000851  , 1.51811516651688)
  ( 5.0140830836352368  , 1.5150285057997)
  ( 5.02561206256903  , 1.5082116164628)
  ( 5.037021130085438  , 1.497712029237914)
  ( 5.048438232512948  , 1.4835411923116284)
  ( 5.059988199265982  , 1.465675648863278)};
\node[color=red] at (4.8,2.0)  {\scriptsize $\varphi (I_{p}+\i y)$}; 
\draw[smooth] plot coordinates {(   5.059988199265982 ,  1.465675648863278)
(   5.071799138009604,   1.4440568925861370)
(   5.0840090915330052,   1.4185899026268498)
(   5.0967737218939657,   1.3891402926445851)
(   5.1102759820127268,   1.3555299512710834)
(   5.124739135566241,   1.3175310265720872)};
\draw[smooth] (2.8, 1.2)--(2.75,1.25);
\end{scope}
\end{scope}

%schlitze
\draw[thick] (6, -2.7)--(9.6, -2.7) node[anchor=north east]{\scriptsize $\log |c_p|+p\i \pi$};
\draw[thick] (6, -1.1)--(9.1, -1.1) node[anchor=north east ]{\scriptsize $\log |c_{p+1}|+(p+1)\i \pi$};
\draw[thick] (6, 0.5)--(9.5, 0.5) node[anchor=north east]{\scriptsize $\log |c_{q-1}|+(q-1)\i \pi$};
\draw[thick] (6, 2.1)--(9.6, 2.1) node[anchor=north east]{\scriptsize $\log |c_q|+q\i \pi$};
\end{scope}

%Pfeil
\draw[->, thick]  (2.8, -2) parabola bend ( 3.5, -2.4)  (4.2, -2);
\node at (3.5  ,-2){\small $\varphi$} ; 
\end{scope}

\end{tikzpicture}

\caption{Conformal mapping of $\mathbb C_-$ onto $\Omega(s)$}
\label{picture1}
\end{center}
\end{figure}
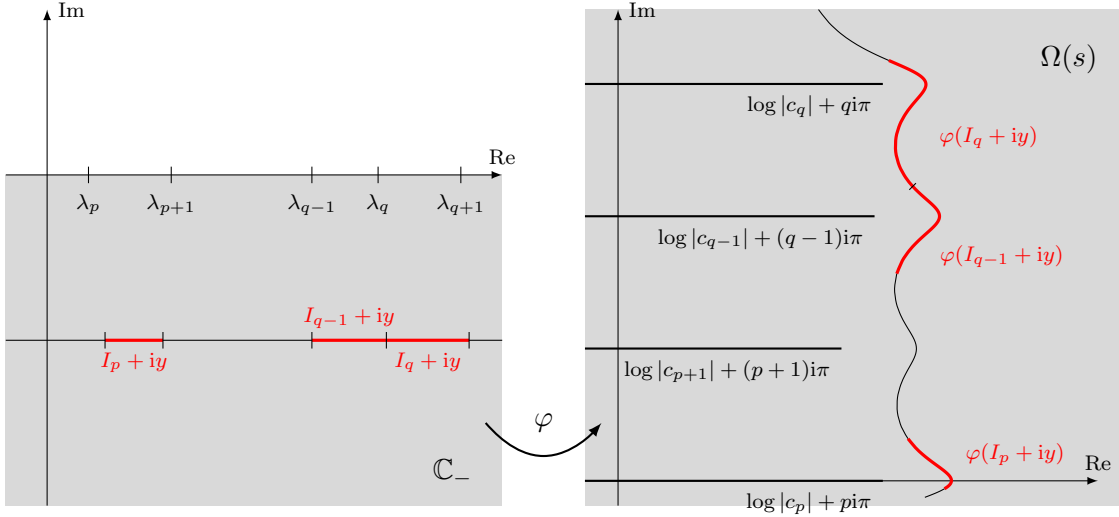

  If we approximate the occurring integrals by its Riemann sums we get
  \[\int_I |F(x+\i y)|^2dx \approx \sum_{n=p}^q c_n^2 |I_n|, \quad \int_I |F(x+\i y)|^{-2}dx \approx \sum_{n=p}^q c_n^{-2} |I_n|.\]
  The properties (\ref{sep}), (\ref{eq2}) let us suppose that also
  \[\delta'\leq |I_n|\leq \Delta', \qquad  n=p,\ldots, q,\]
  for some constants $\delta', \Delta'>0$ that do not depend on $I$. Hence it turns out that it will just be the discrete Muckenhoupt condition (\ref{A2t}) for the numbers $d_n=c_n^2$  that is our desired characterization.  Making these informal arguments rigorous is enough for showing the following statement. 
  
\begin{thm}
\label{charthm} 
Let $s=\{c_n\}_{n\in\mathbb Z}$  be a sequence with $(-1)^n c_n \geq 0$, and $\{d_n\}_{n\in \mathbb Z}= 
\{c_n^2\}_{n\in\mathbb Z}$ satisfy the discrete Muckenhoupt condition {\rm  (\ref{A2t})}. Then for  every conformal map $\varphi: \mathbb C_-\to \Omega (s)$ with $\lim_{y\to - \infty}\Re \varphi(\i y)=\infty$ the function $F$ in {\rm (\ref{expf})} is an entire function of exponential type. $\varphi$ can be normalized so that the exponential type of $F$ is $\pi$, and in this case the zero set  $\{\lambda_n\}_{n\in \mathbb Z}\subset \mathbb R$  of $F$ is a {\cis}. Conversely, every {\cis} is obtained in this way. 
\end{thm}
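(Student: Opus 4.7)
The plan is to make the informal sketch preceding the theorem rigorous, then invoke the Pavlov--Lyubarskii--Seip characterization (Theorem \ref{pavlov}) to conclude in the forward direction; the converse follows by reading the same comparison in reverse. Fix a conformal map $\varphi:\mathbb C_-\to\Omega(s)$ with the stated normalization. Since every slit of $\Omega(s)$ lies on a horizontal line $\Im w=n\pi$, the boundary values of $\Im\varphi$ on $\mathbb R$ are piecewise constants in $\pi\mathbb Z$, so $F=e^\varphi$ is real on $\mathbb R$ and extends by Schwarz reflection to an entire function. The real zeros of $F$ are the preimages of the $-\infty$-ends of the slits, interlacing with the critical points, which are the preimages of the slit tips $\log|c_n|+\i n\pi$. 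The first task is to show $F$ has finite exponential type. Since $\log|F(\i y)|=\Re\varphi(\i y)$ for $y<0$, this amounts to a linear growth estimate for $\Re\varphi(\i y)$ as $y\to-\infty$, which I would obtain by comparing $\Omega(s)$ with a periodic model slit domain (all $|c_n|$ equal). The comparability of neighbouring $|c_n|$ forced by applying the discrete Muckenhoupt condition (\ref{A2t}) to two-element index sets makes this comparison quantitative, and a dilation $z\mapsto\alpha z$ then normalizes the exponential type to $\pi$.

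Next I would verify the analytic Muckenhoupt condition (\ref{A2}) for $w(x)=|F(x+\i y)|^2$ at some fixed $y<0$. Following the informal argument already given, choose a finite interval $I\subset\mathbb R$ and decompose it into maximal subintervals $I_p,\ldots,I_q$ on which $\varphi(\,\cdot\,+\i y)$ stays in a neighbourhood of the $n$-th slit tip. Using the local square-root behaviour of $\varphi$ near each tip together with standard distortion estimates, one obtains
\begin{equation*}
\int_{I_n}|F(x+\i y)|^{2}\,dx\asymp c_n^{2}\,|I_n|,\qquad\int_{I_n}|F(x+\i y)|^{-2}\,dx\asymp c_n^{-2}\,|I_n|,
\end{equation*}
uniformly in $n$ and $I$, while the lengths $|I_n|$ lie in some fixed interval $[\delta',\Delta']\subset(0,\infty)$. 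Summing over $n$ and applying (\ref{A2t}) to $d_n=c_n^2$ then yields (\ref{A2}) for $w$. The same distortion estimates produce the separation (\ref{sep}) and relative density (\ref{eq2}) of $\{\lambda_n\}$, and the canonical product (\ref{gf}) for these $\lambda_n$ converges to $F$. Theorem \ref{pavlov} thus identifies $\{\lambda_n\}$ as a \cis.

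For the converse, let $\{\lambda_n\}$ be a real \cis{} with generating function $F$ of exponential type $\pi$. Then $F\in{\cal LP}$ because $F$ is real entire of finite exponential type with only real zeros, so the representation (\ref{expf}) applies with a conformal $\varphi:\mathbb C_-\to\Omega({\rm Cr}\,F)$; the critical values $c_n\in{\rm Cr}\,F$ satisfy $(-1)^n c_n\geq 0$ by the Laguerre--P\'olya ordering. Running the comparison of sums and integrals above in the reverse direction converts (\ref{A2}), which holds by Theorem \ref{pavlov}, into (\ref{A2t}) for $d_n=c_n^2$.

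The main obstacle is the rigorous passage between the discrete sums $\sum d_n|I_n|$ and the continuous integrals $\int_I|F|^{\pm 2}\,dx$: both the local square-root comparisons near slit tips and the uniform bounds on $|I_n|$ rest on a priori distortion estimates for $\varphi$, which themselves depend on geometric consequences of (\ref{A2t}) --- especially the comparability of neighbouring $|c_n|$. Closing this loop cleanly while keeping all constants uniform in the interval $I$ is the technical crux of the argument.
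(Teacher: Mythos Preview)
Your overall plan --- make the informal comparison rigorous and invoke Theorem~\ref{pavlov} in both directions --- is exactly the paper's. The differences are in the machinery, and in one place there is a genuine gap.

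In the forward direction your ``standard distortion estimates'' stand in for three quite specific tools. For exponential type, comparison with a periodic model is not enough: (\ref{A2t}) only yields $|\log|c_n||\leq C+\log|n|$ (see (\ref{ugl})), so the slit tips drift logarithmically and $\Omega(s)$ is not uniformly close to any periodic comb. The paper instead uses conformal module theory --- a consequence of Teichm\"uller's theorem from \cite{gold} together with the Gr\"otzsch lower bound --- to show that preimages of large circular arcs in $\Omega(s)$ are near-semicircles (Lemma~\ref{gwlemma}). For separation of the $\lambda_n$, which you dispatch in a phrase, the paper needs harmonic measure estimates, the Lindel\"of principle against an auxiliary comparison domain, the Gehring--Hayman theorem, and a further module argument (Lemma~\ref{modullemma2}). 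For the key estimate $|F(x+\i y)|\asymp|c_n|$ on $I_n$ the paper again uses Gehring--Hayman applied to hyperbolic geodesics, not local square-root asymptotics.

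The genuine gap is in the converse. You correctly flag that the reverse comparison requires comparability of neighbouring $|c_n|$, but this cannot come from ``running the comparison in reverse'': at that stage (\ref{A2t}) is precisely what you are trying to prove, so you have no a priori control on $|\log|c_n|-\log|c_{n+1}||$ and hence no uniform distortion estimates to invoke. The paper's Lemma~\ref{lem4} supplies this bound from the CIS hypothesis by an independent, essentially harmonic-analytic route: (\ref{A2}) forces $\psi=\log|F(\cdot+\i y)|\in{\rm BMO}(\mathbb R)$, and the Poisson representation (\ref{darst}) for $\log|F|$ in the half-plane is then used to compare $\log|c_n|$ with local averages $\psi_{I_{\tilde x_n}}$, transferring BMO control from the line $\Im z=y$ back to the real axis where $\log|F|$ has logarithmic singularities. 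Only after (\ref{nachbarn}) is established can Gehring--Hayman be applied and the sums-versus-integrals comparison be reversed. This BMO/Poisson step is the missing idea in your sketch.
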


\begin{proof}
It suffices to adapt arguments from \cite{eresod}, which make use of  ideas from \cite{gold}. For the reader's convenience we provide the details.  In order to structure the proof we formulate important steps  as lemmas. 

First suppose that  the sequence $\{d_n\}_{n\in \mathbb Z}= 
\{c_n^2\}_{n\in\mathbb Z}$ satisfies the discrete Muckenhoupt condition (\ref{A2t}), and $\varphi: \mathbb C_-\to \Omega (s)$ is a conformal map  with $\lim_{y\to - \infty}\Re \varphi(\i y)=\infty$.   Choosing $I=\{p, p+1, \ldots, q\}$ in (\ref{A2t}) we obtain
\[c_p^2c_q^{-2}\leq \sum_{n=p}^qc_n^2\sum_{n=p}^qc_n^{-2}\leq C(q-p+1)^2.\]
Hence
for some $C_1>0$  we have 
\begin{equation}
\label{ersteugl}
|\log|c_p|-\log |c_q| | \leq C_1+\log |p-q|\qquad \forall p\neq q,
\end{equation}
and in particular
\begin{eqnarray}
\label{ugl}
-C_2-\log |n|\leq \log |c_n|\leq  C_2+\log |n| \qquad  \forall n\in\mathbb Z\setminus\{0\},
\end{eqnarray}
where $C_2:=\log |c_0|+C_1$. For $r>0$ let $L(r)$ be the connected arc of $\Omega(s)\cap\{|z|=r\}$ that intersects the positive real axis and let $l(r)$ denote its length.  Next we show that the preimage of $L(r)$ is almost a  semicircle in $\mathbb C_-$. 
\begin{lem}
\label{gwlemma}
 For $r\to\infty$ the relations
  \begin{equation}
\label{urbild}
\sup_{z\in L(r)}|\varphi^{-1}(z)|=A(1+o(1))r\qquad \mbox{and}\qquad \inf_{z\in L(r)}|\varphi^{-1}(z)|=A(1+o(1))r
\end{equation}
hold with a constant $A>0$.
\end{lem} 
\begin{proof} Let $\Omega(r_1, r_2)\;(0<r_1<r_2)$ be  the component  of $\Omega(s)\setminus (L(r_1)\cup L(r_2))$ which contains $\frac{1}{2}(r_1+r_2)$ and $M(r_1, r_2):=\mod \Gamma_{r_1, r_2}$ be the module of the family $\Gamma_{r_1, r_2}$ of curves in $\Omega(r_1, r_2)$ separating $L(r_1)$ and $L(r_2)$. 
The paper \cite{gold} provides the following consequence of one of Teichm\"uller's module theorems: {\it If $\Phi(r),\; r\geq r_0$ is a function such that 
\begin{equation}
\label{goldbg}
\lim_{r_1, r_2\to\infty} M(r_1, r_2)-(\Phi(r_2)-\Phi(r_1))=0
\end{equation}
then there is $A>0$ such that}
\begin{equation*}
\sup_{z\in L(r)}|\varphi^{-1}(z)|=(1+o(1))A{\rm e}^{\pi\Phi(r)}\qquad \mbox{\it and}\qquad 
\inf_{z\in L(r)}|\varphi^{-1}(z)|=(1+o(1))A{\rm e}^{\pi\Phi(r)}. 
\end{equation*}
Applying this theorem for $\Phi(r)=\frac{1}{\pi}\log r$ gives the assertion.

Let $a(r), b(r)$ be the endpoints of $L(r)$. Then $a(r)=x(r)+\i\pi n(r)$ for some $n=n(r)>0$, and  $x=x(r)\leq \log |c_n|$. If  $x(r)\leq 0$ then $\log |c_{n+1}|+\i\pi (n+1)$ lies outside the circle of radius $r$, hence 
\[(\log|c_{n+1}|)^2+\pi^2 (n+1)^2>r^2=x^2+\pi^2 n^2\]
 and (\ref{ugl}) yields
\[ (C_2+\log (n+1))^2+(2n+1)\pi^2>x^2,\]
 and therefore  with $r\geq \pi n$
\[-x(r)\leq C_3\sqrt{n}\leq \pi^{-\frac{1}{2}}C_3\sqrt{r}\]
with some constant $C_3>0$ independent of $n$ and $r$.
For $x(r)\geq 0$ we apply (\ref{ugl}) to obtain
\begin{equation}
\label{augl}
x(r) \leq \log  |c_n|\leq C_2+\log n\leq C_2-\log \pi+\log r.
\end{equation}
The analogous estimate
\begin{equation}
\label{bugl}
-\pi^{-\frac{1}{2}} C_3\sqrt{r}\leq \Re b(r)\leq C_2-\log \pi+\log r
\end{equation}
is valid for the other endpoint. Hence we can find for the length of $L(r)$
\[\pi r-2r\arcsin \left(\frac{C_2-\log \pi +\log r}{r}\right ) \leq l(r)\leq \pi r+2r\arcsin \left(\frac{\pi^{-\frac{1}{2}}C_3 \sqrt{r}}{r}\right ),\]
and therefore
\begin{equation}
\label{growth}
\pi(r-C_4\log r)\leq l(r)\leq \pi (r+ C_4\sqrt{r})
\end{equation}
 with a constant $C_4>0$. 
By the elementary properties of  the module (cf. \cite{ahl}), $M(r_1, r_2)$ is bounded from above by the module $s(r_1, r_2)$  of  the family of curves joining the  noncircular sides of 
\begin{equation}
\label{s}
S(r_1, r_2):=\{ z\in \mathbb C\colon r_1< |z|<r_2, |\arg z|<\frac{\pi}{2}(r- C_4\log r)/r\}, 
\end{equation}
i.e.
\begin{equation}
\label{a}
M(r_1, r_2)\leq s(r_1, r_2).
\end{equation}
We postpone  the proof that 
\begin{equation}
\label{b}
\left| s(r_1, r_2)-\frac{1}{\pi}\log\frac{r_2}{r_1}\right |\to 0 \qquad \mbox{as} \qquad r_1, r_2\to \infty
\end{equation}
to Lemma \ref{modullemma}. A lower estimate of the module follows from the Gr\"otzsch principle (cf. \cite{pom}, 
Proposition 11.12)
\[ M(r_1, r_2)\geq \int_{r_1}^{r_2}\frac{dr}{l(r)}.\]
With the help of (\ref{growth}) we find 
\begin{equation}
\label{c}
M(r_1, r_2) -\frac{1}{\pi}\log\frac{r_2}{r_1}
\geq\frac{1}{\pi}\int_{r_1}^{r_2} \frac{dr}{r+C_4\sqrt{r}}-\frac{1}{\pi}\int_{r_1}^{r_2}\frac{dr}{r}
=-\frac{1}{\pi}\int_{r_1}^{r_2} \frac{C_4dr}{(r+C_4\sqrt{r})\sqrt{r}}
\end{equation}
In view of $\displaystyle \int_{1}^{\infty} \frac{dr}{(r+C_4\sqrt{r})\sqrt{r}}<\infty$ the right hand side of (\ref{c}) tends to $0$ as $r_1, r_2\to \infty$. Together with (\ref{a}), (\ref{b}) this implies (\ref{goldbg}) and the lemma is proved. 
\end{proof}

From (\ref{urbild}) follows  that for every sufficiently large $R>0$ the preimage $\varphi^{-1}(L(r))$ for $r=\frac{2}{A}R$  and an interval of the real axis are the boundary of a domain containing $\mathbb C_-\cap\{|z|=R\} $ in its interior. Thus $\mathbb C_-\cap\{|z|=R\} $ is mapped by $\varphi$ into the component of $\Omega(s)\setminus L(r)$ not containing $r+1$. 
Consequently, for every $z\in \mathbb C_-$ with $|z|=R$ either
\[\Re\varphi (z)\leq |\varphi(z)|\leq  r =\frac{2}{A}R=\frac{2}{A}|z|\]
or (by (\ref{augl}) and (\ref{bugl}))
\[\Re \varphi (z)\leq C_2-\log \pi+  \log r=C_5+ \log R=C_5+\log |z|,\]
and we have proved
 \[ |F(z)|=O\left ( {\rm e}^{ C_6|z|}\right ), \qquad C_6:=\max\left (\frac{2}{A},1\right ), \]
i.e. $F(z)$ is  of exponential type in $\mathbb C_-$. Since $F$ is continuous in $\overline{\mathbb C}_-$ and real-valued on the real axis, it can be extended to $\mathbb C$ by the reflection principle and is of exponential type there. 

Note that  for $a>0$ also $\varphi(az)$ is a conformal map of $\mathbb C_-$ onto $\Omega(s)$.  Choosing  $a>0$ appropriately     we can always achieve that the exponential type of $F$ is equal to  $\pi$, and we assume this normalization for the rest of the proof for $a=1$.  
Then we have to show that $\{\lambda_n\}_{n\in\mathbb Z}=F^{-1}(0)$ is a \cis. 

\begin{lem} 
The sequence $\{\lambda_n\}_{n\in\mathbb Z}$ is separated.  
\end{lem}

\begin{proof}
For $\zeta\in \Omega(s)$ let $L_\zeta(r)$ be the connected arc of $\{|\zeta-z|=r\}\cap \Omega(s)$ that contains $\zeta+r$ and $\Omega_\zeta(r)$ the component of $\Omega(s)\setminus L_\zeta(r)$ that contains $\zeta+r/2$.   

Denote by $P_t, \,t\in \mathbb R$  the polygonal line
\[P_t:=\bigcup_{n\in\mathbb Z} \{x(\log |c_n|+\i n\pi)+ (1-x)(\log |c_{n+1}| +\i (n+1)\pi) +t\colon x\in [0,1]\}.\]
$P_0$ just connects the endpoints of the slits of $\Omega(s)$. For fixed $t>0$ and every $\zeta \in P_t$
 the length $l_\zeta(r)$ of $L_\zeta(r)$ satisfies  the estimate (\ref{growth}) with a constant $C_4>0$ that does not  depend on $\zeta$ (but, of course, on $t$). Hence we conclude as in Lemma \ref{gwlemma}
 \[A(1+g_1(\zeta,r))r\leq \inf_{z\in L_\zeta(r)}|\varphi^{-1}(z)|\leq \sup_{z\in L_\zeta(r)}|\varphi^{-1}(z)| \leq 
A(1+g_2(\zeta,r))r, \qquad r\geq r_0(\zeta),\]
where $g_1, g_2$ are such that
\[\lim_{r\to \infty} g_j(\zeta,r)=0,\qquad j=1,2,\, \zeta\in P_t.\] 
According  to \cite{mar}, \cite{tsui}, the harmonic measure $\omega_\zeta(r)$ of the boundary arc $L_\zeta(r)$  at the point $\zeta$ with respect to  the domain $\Omega_\zeta(r)$ allows the estimate
\[\omega_\zeta(r)\leq C_7\exp\left(-\pi \int\limits_{r_\zeta}^r\frac{d\varrho}{l_\zeta(\varrho)}\right)  \leq C_7\exp\left(-\int\limits_{r_\zeta}^r\frac{d\varrho}{\varrho+C_4\sqrt{\varrho} }\right) \leq \frac{C_8}{r}, \]
where $r_\zeta={\rm dist}\, (\zeta, \partial \Omega (s))>0$. For the harmonic function $-\Im \varphi^{-1}$ we find now
\[-\Im \varphi^{-1}(\zeta)\leq  \omega_\zeta (r)\sup_{z\in L_\zeta(r)}|\varphi^{-1}(z)|\leq \frac{C_8}{r}A(1+g_2(\zeta,r))r.\]
Letting $r\to\infty$ we conclude 
\begin{equation}
\label{streifen}
0>\Im \varphi^{-1}(\zeta)\geq -AC_8, \qquad \zeta \in P_t,
\end{equation}
i.e. the ''left'' component of $\Omega(s)\setminus P_t$ is mapped into some strip below the real axis. 

Let $x_n:=\varphi^{-1}(\log |c_n|+\i n\pi)$ be the critical points of $F$  and consider the non-Euclidean segments  $H_{n,m}\subset\overline{\mathbb C}_-$ connecting $x_n$ and $x_m$, i.e. $H_{n,m}$ are Euclidean semicircles with centers $(x_n+x_m)/2$. $\varphi(H_{n,m})$ is a curve connecting the endpoints of the $n$-th and $m$-th slit of $\Omega(s)$, and according to the Gehring-Hayman Theorem (see \cite{pom}, Theorem 4.20) it is not much longer than any curve $\gamma_{n,m}\subset \Omega(s)$ connecting the same endpoints, i.e.
\[{\rm length}\,(\varphi(H_{n,m}))\leq C_9\,{\rm length}\, (\gamma_{n,m}),\]
where $C_9>0$ is a universal constant.  Choosing $\gamma_{n, n+1}$ as the straight line segment between two successive slit endpoints and noting that by (\ref{ersteugl}) this is not longer than $\sqrt{\pi^2+C_1^2}$, we find that
\begin{equation}
\label{laenge}
{\rm length}(\varphi(H_{n,n+1}))\leq C_9\sqrt{\pi^2+ C_1^2}, \qquad n\in\mathbb Z,
\end{equation}
and thus $\varphi (H_{n, n+1})\cap P_t=\emptyset$ for all $n\in\mathbb Z$ provided that $t>0$ has been chosen sufficiently large. From (\ref{streifen}) follows that
\[H_{n,n+1}\subset \{z\in\mathbb C\colon -AC_8\leq \Im z\leq 0\}, \qquad n\in\mathbb Z,\]
and hence the (Euclidean) radii of the semicircles $H_{n,n+1}$ are bounded. We infer
\begin{equation}
\label{sep3}
\Delta_1:=\sup_{n\in\mathbb Z}|x_{n+1}-x_n |<\infty.
\end{equation}
Let $G$ be the auxiliary domain
\[G:=\{x+\i y\in\mathbb C\colon x>\max(\log |y|, C_{10})\},\]
where $C_{10}<0$ (sic!) is chosen so small that $G$ has the property that for every $n\in\mathbb Z$ there is $\zeta_n\in\mathbb C$ such that $G+\zeta_n\subset \Omega(s)$ and $C_{10}+1+\zeta_n$ is ''left'' of $P_{t_0}$ for $t_0=- C_9\sqrt{\pi^2+ C_1^2}$.  Let further $\psi:\mathbb C_-\to G$ be a conformal mapping with 
\[\psi (\infty)=\infty, \qquad \lim_{|z|\to\infty} \frac{|\psi(z)|}{|z|}=A,\]
and let $y_0<0$ be so close to $0$  that $C_{10}+1\in \psi(\mathbb R +\i y_0)$. Then $\zeta_n+\psi(z)$ maps $\mathbb C_-$ onto $G+\zeta_n$, and according to  the Lindel\"of principle (cf. \cite{lav}) $\varphi(\mathbb R+\i y_0)$ lies in the ''left'' component of $\Omega(s)\setminus (\psi(\mathbb R+\i y_0)+\zeta_n)$, see Figure \ref{figure3}.

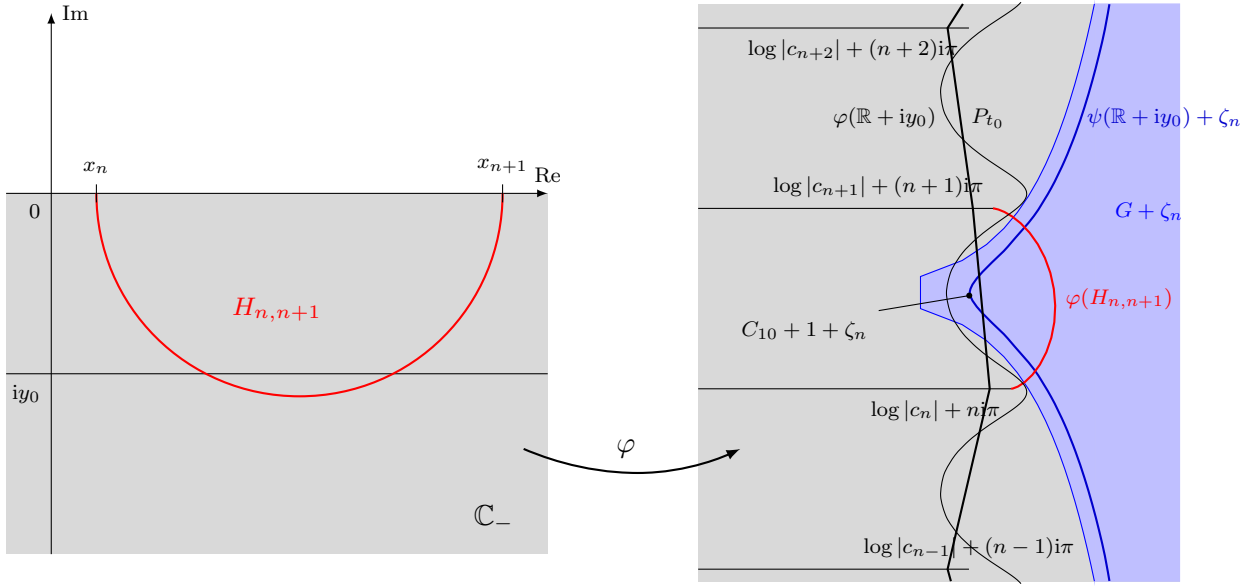
\begin{figure}[h]
\begin{center}

\begin{tikzpicture}[scale=0.8,>=latex]
%links
\begin{scope}[scale=1.5, yshift=0.5cm]
\fill[black!15!white]  (-3, 1) rectangle (3,-3) node[pos=0.9, color=black]{\small $\mathbb C_-$};
\draw[->]  (-3, 1) -- (3,1) node[anchor=south,]{\scriptsize Re};
\draw(-3,-1)--(3,-1);
\draw[->] (-2.5, -3) --(-2.5,-1.0) node[anchor=north east]{\scriptsize $\i y_0$}--(-2.5,1)node[anchor=north east]{\scriptsize $0$} --(-2.5, 3) node[anchor=west]{\scriptsize Im};

%Halbkreis
\draw[color=red, thick] (-2.0, 1.0) arc (180:360:2.25cm) node[anchor=north, midway]{\scriptsize $H_{n,n+1}$};
%Striche oben
\draw (-2.0, 1.1) node[anchor=south] {\scriptsize $x_{n}$}-- (-2.0, 0.9) ;
\draw (2.5, 1.1)node[anchor=south] {\scriptsize $x_{n+1}$}--(2.5, 0.9) ;%dicke Linien

\end{scope}

%rechts
\begin{scope}[xshift=7cm]

\fill[ color=black!15!white] (0, -4.2) rectangle (8, 5.4);

%Schnitte
\draw (0,5)--(4.5 ,5) node[anchor=north east]{\scriptsize $\log |c_{n+2}|+(n+2) \i \pi$} ;
\draw (0, 2)--(4.92,2) node[anchor=south east]{\scriptsize $\log |c_{n+1}|+(n+1) \i \pi$} ;
\draw (0,-1.0) -- (5.2,-1.0) node[anchor=north east]{\scriptsize $\log |c_n | + n\i\pi$} ;
\draw (0,-4)--(4.5,-4) node[anchor=south]{\scriptsize $\log |c_{n-1}|+(n-1) \i \pi$} ;

%G
\filldraw[color=blue!25!white] (6.6,-4.2)rectangle (8, 5.4) node[pos=0.64, color=blue]{\scriptsize $G+\zeta_n$};
\begin{scope}[xshift=-5.9cm, yshift=8.6cm,xscale=5,yscale=2]
\filldraw[color=blue!25!white, rotate=-90] plot coordinates {(   1.6000000 ,  2.4969813)
(   1.7333333   ,2.4855496)
(  1.8666667  ,2.4734247)
 (  2.0000000 , 2.460517)
 (  2.133333  , 2.446718)
 (  2.2666667  , 2.4318968)
 (  2.400000  , 2.4158883)
 (  2.5333333  , 2.3984860)
  ( 2.6666667  , 2.37942)
  ( 2.8000000  , 2.3583519)
  ( 2.93333  , 2.33479)
  ( 3.0666667  , 2.3080890)
  ( 3.200000  , 2.2772589)
  ( 3.333333  , 2.240794)
  ( 3.4666667  , 2.1961659)
  ( 3.6000000  , 2.1386294)
  ( 3.7333333 , 2.0575364)
  ( 3.866666  , 1.91890)
  ( 4.133333  , 1.9189070)
  ( 4.2666667 , 2.0575364)
  ( 4.4000000 , 2.138629)
  ( 4.533333  , 2.1961659)
  ( 4.6666667  , 2.2407946)
  ( 4.8000000  , 2.277258)
  ( 4.9333333  , 2.30808)
  ( 5.0666667  , 2.3347953)
  ( 5.2000000  , 2.3583519)
  ( 5.3333333  , 2.3794240)
  ( 5.4666667  , 2.3984860)
  ( 5.6000000  , 2.4158883)
  ( 5.7333333  , 2.4318968)
  ( 5.8666667  , 2.4467184)
  ( 6.0000000  , 2.4605170)
  ( 6.1333333  , 2.4734247)
  ( 6.2666667  , 2.4855496)
  ( 6.4000000  , 2.4969813)}--cycle;
\draw[color=blue, rotate=-90] plot coordinates {(   1.6000000 ,  2.4969813)
(   1.733333   ,2.4855496)
(  1.8666667   ,2.4734247)
 (  2.0000000  , 2.4605170)
 (  2.1333333  , 2.4467184)
 (  2.266666  , 2.4318968)
 (  2.4000000  , 2.4158883)
 (  2.533333 , 2.3984860)
  ( 2.6666667  , 2.3794240)
  ( 2.8000000  , 2.3583519)
  ( 2.9333333  , 2.3347953)
  ( 3.0666667  , 2.3080890)
  ( 3.200000  , 2.2772589)
  ( 3.333333  , 2.2407946)
  ( 3.466666  , 2.1961659)
  ( 3.6000000  , 2.1386294)
  ( 3.733333  , 2.0575364)
  ( 3.8666667  , 1.9189070)
  ( 4.1333333  , 1.9189070)
  ( 4.2666667  , 2.0575364)
  ( 4.4000000  , 2.1386294)
  ( 4.5333333  , 2.196165)
  ( 4.6666667  , 2.2407946)
  ( 4.8000000  , 2.2772589)
  ( 4.9333333  , 2.3080890)
  ( 5.0666667  , 2.3347953)
  ( 5.2000000  , 2.3583519)
  ( 5.33333  , 2.379424)
  ( 5.466666  , 2.3984860)
  ( 5.6000000  , 2.4158883)
  ( 5.7333333 , 2.4318968)
  ( 5.8666667  , 2.4467184)
  ( 6.0000000  , 2.4605170)
  ( 6.1333333 , 2.4734247)
  ( 6.2666667 , 2.485549)
  ( 6.4000000 , 2.4969813)};
  \draw[color=blue!80!black, thick, rotate=-90, smooth] plot coordinates {(   1.6021557 ,  2.5468290)
(   1.810421 ,  2.5317684)
(   2.0238022 ,  2.5122372)
(   2.2399716 ,  2.4897689)
(   2.4583212,  2.4639642)
(   2.6788999,  2.4339345)
(   2.902218 ,  2.398100)
(   3.1295562 ,  2.3537612)
(   3.364355 ,  2.2954422)
(   3.6186560 ,  2.2089545)
(   4.0000000,  2.0828016)
(   4.3813440,  2.2089545)
(   4.635644 ,  2.2954422)
(   4.8704438 ,  2.3537612)
(   5.0977813 ,  2.3981008)
(   5.3211001,  2.4339345)
(   5.541678 ,  2.4639642)
(   5.760028,  2.4897689)
(   5.976197 ,  2.5122372)
(   6.1895787 ,  2.5317684)
(   6.3978443 ,  2.5468290)};
\end{scope}

%P_t
\draw[thick, xshift=2mm] (4.2, 5.4)--(3.945,5)-- (4.365,2)node[midway, anchor=west]{\scriptsize $P_{t_0}$} --(4.645,-1) -- (3.945, -4) --(4.0, -4.2);

\fill[color=black] (4.505, 0.55)circle (0.5mm);
\draw (4.505, 0.55)-- (3, 0.3) node[anchor=north east]{\scriptsize $C_{10}+1+\zeta_n$};

%phi-Bild
\begin{scope}[xshift=-5cm, yshift=2cm]
\draw[color=red, rotate=-90, scale=3, thick] plot  coordinates {(   1.0000000   ,3.4000000)
(   0.99838738,   3.4099661)
(   0.98794117,   3.4372376)
(   0.96311202,   3.4756173)
(   0.92218468,   3.5183347)
(   0.86600058,   3.5597426)
 (  0.79669486,   3.5956129)
 (  0.71700878,   3.6229170)
 (  0.62999493,   3.6395932)
 (  0.53888494,   3.6444151)
 (  0.44699731,   3.636941)
 (  0.35763655,   3.617509)
 (  0.27396708,   3.587239)
 (  0.19885652,   3.548027)
 (  0.13468829,   3.5025195)
 (  0.083147431,   3.454020)
 (  0.044993619,   3.4063385)
 (  0.019852559,   3.363524)
 (  0.0060805966,   3.329495)
 (  0.00077573911,   3.307572)
 (  0.0000000,   3.3000000)};
\end{scope}
\node[color=red] at (7,0.5) {\scriptsize $\varphi(H_{n,n+1})$};
 
 \begin{scope}[xscale=5, yscale=3.3, xshift=-0.24cm, yshift=3.68cm]
 \draw[rotate=-90, smooth] plot coordinates {(   2.0349706 ,  1.312987)
(   2.0534510,   1.3006848)
(   2.0745038 ,  1.2828744)
(   2.0992145 ,  1.2591144)
(   2.1292082 ,  1.2289823)
(  2.1669667  , 1.1923100)
  ( 2.2162398 ,  1.1498254)
  ( 2.2821768  , 1.1047002)
  ( 2.3695583  , 1.0652831)
  ( 2.4762760  , 1.0458907)
  ( 2.5868509  , 1.0572352)
  ( 2.6824230   ,1.0937063)
  ( 2.7562724   ,1.1404343)
  ( 2.811745  , 1.1865771)
  ( 2.854289  , 1.2273612)
  ( 2.8882360  ,1.2613301)
  ( 2.916540  , 1.2883313)
  ( 2.9411835  , 1.3086131)
  ( 2.9635379  , 1.3224788)
  ( 2.9846235  , 1.3301630)
  ( 3.0052747  , 1.3317908)
  ( 3.0262635  , 1.3273666)
  ( 3.0484056  , 1.3167725)
  ( 3.0726784  , 1.2997777)
  ( 3.100378  , 1.2760707)
  ( 3.1333550  , 1.2453605)
  ( 3.1743622  , 1.2076649)
  ( 3.227471   ,1.1640743)
  ( 3.2980472 , 1.1185279)
  ( 3.3903144  , 1.0806255)
  ( 3.500000   ,1.0651929)
  ( 3.6096856 , 1.0806255)
  ( 3.7019528  , 1.1185279)
  ( 3.772528  , 1.1640743)
  ( 3.8256378   ,1.2076649)
  ( 3.8666450 , 1.245360)
  ( 3.8996219  , 1.2760707)
  ( 3.9273216  , 1.299777)
  ( 3.9515944   ,1.3167725)
  ( 3.9737365   ,1.3273666)
  ( 3.9947253  , 1.3317908)
  ( 4.015376  ,1.3301630)
  ( 4.0364621 , 1.3224788)
  ( 4.0588165  , 1.3086131)
  ( 4.0834595  , 1.2883313)
  ( 4.111764   ,1.2613301)
  ( 4.1457101 ,  1.2273612)
  ( 4.1882541 ,  1.1865771)
  ( 4.243727 ,  1.1404343)
  ( 4.3175770 ,  1.093706)
  ( 4.413149 ,  1.0572352)
  ( 4.5237240 ,  1.0458907)
  ( 4.6304417 ,  1.0652831)
  ( 4.7178232 ,  1.1047002)
  ( 4.7837602 ,  1.1498253)
  ( 4.8330333 ,  1.1923100)
  ( 4.8707918 ,  1.2289823)
  ( 4.9007855 ,  1.2591144)
  ( 4.9254962 ,  1.28287)
  ( 4.9465490 ,  1.3006848)
   (4.9650294,  1.312987)};
\end{scope}
\node at ( 3.1,3.5) {\scriptsize $\varphi(\mathbb R+\i y_0)$};
\node[color=blue!80!black] at ( 7.75,3.5) {\scriptsize $\psi(\mathbb R+\i y_0)+\zeta_n$};

\end{scope}

%Pfeil
\draw[->, thick]  (4.1, -2) parabola bend ( 6, -2.4)  (7.7, -2);
\node at (5.8  ,-2){\small $\varphi$} ; \end{tikzpicture}

\caption{The auxiliary domain $G+\zeta_n$}
\label{figure3} 
\end{center}
\end{figure}

Since $\varphi(H_{n,n+1}) $ lies ''right'' of $P_{t_0}$, the curves $\varphi (\mathbb R+\i y_0)$ and $\varphi (H_{n,n+1})$ have a non-empty intersection.  Hence also $(\mathbb R+\i y_0)\cap H_{n, n+1}\neq \emptyset$ for all $n\in\mathbb Z$  and thus the (Euclidean) radius of $H_{n,n+1}$ cannot be smaller than $|y_0|$. We infer
\begin{equation}
\label{sep1}
\delta_1:=\inf_{n\in\mathbb Z} |x_n-x_{n+1}|>0.
\end{equation}
Let the zero set $\{\lambda_n\}_{n\in\mathbb Z}=F^{-1}(0)=\varphi^{-1}(-\infty)$ be numbered such that $x_n<\lambda_n<x_{n+1}$ for all $n\in \mathbb Z$.  Let $E_n$ be the set $E_n:=\{x+\i n\pi  \colon x\leq \log |c_n|-2C_1\}\cup\{x+\i (n+1)\pi\colon x\leq \log |x_{n+1}|- 2C_1\}$ and let $\tilde{E}_n\subset (x_n, x_{n+1})$ be such that $\varphi (\tilde{E}_n)=E_n$. 
We look at the family $\Gamma_1^{(n)}$ of curves connecting $\tilde{E}_n$ with $[x_{n+1}, \infty )$ in $\mathbb C_-$ and at the family   $\Gamma_2^{(n)}$ of curves connecting $\tilde{E}_n$ with $(-\infty , x_{n}]$ in $\mathbb C_-$. 
Further let
\begin{eqnarray*}
E_n^\pm&:=&\bigcup _{k=n+1}^\infty\{x+\i k\pi\colon x\leq \log |c_n|\pm C_1\pm \log |k-n|\},\\
\Omega_n^\pm&:=&\mathbb C\setminus \bigcup_{k\in\mathbb Z\setminus \{n\}}\{x+\i k\pi\colon x\leq\log  |c_n|\mp C_1\mp \log|k-n|\}\cup\{x+\i n\pi\colon x\leq\log  |c_n|\},
\end{eqnarray*}
and
 $\Gamma_-^{(n)}$  the family of curves $\gamma$ connecting $E_n$ with $E_n^-$
in $\Omega_n^-$ 
such that if $\gamma: (0,1)\to\mathbb C$ then we have $n\pi< \Im \gamma(t)<(n+1)\pi$ for $t$ sufficiently close to $0$  and $\Im \gamma(t)>(n+1)\pi$ for $t$ sufficiently close to $1$.  Similarly, let  $\Gamma_+^{(n)}$ be the family of curves $\gamma$ connecting $E_n$ with
$E_n^+$ in
$\Omega_n^+$ such that if $\gamma: (0,1)\to\mathbb C$ then we have $n\pi< \Im \gamma(t)<(n+1)\pi$ for $t$ sufficiently close to $0$  and $\Im \gamma(t)>(n+1)\pi$ for $t$ sufficiently close to $1$.
We have 
\begin{equation}
\label{schranken1}
\mod \Gamma_-^{(n)}\leq \mod \varphi(\Gamma_1^{(n)})\leq \mod \Gamma_+^{(n)}
 \end{equation}
 by the monotonicity properties of the module. 
Since all domains $\Omega_n^+$ ($\Omega_n^-$) and all sets $E_n^+$ ($E_n^-$) differ only by a translation, the upper (lower) bound in (\ref{schranken1}) is in fact independent of $n\in\mathbb Z$. By the conformal invariance of the module we have thus
\[0<C_{11}\leq \mod \Gamma_1^{(n)}\leq C_{12}<\infty,\]
and a similar estimate holds for $\Gamma_2^{(n)}$.
 From the subsequent Lemma \ref{modullemma2} we obtain the existence of $\delta_2\in (0,1)$ such that
\[\tilde{E}_n\subset\left[\frac{x_n+x_{n+1}}{2}-\delta_2\frac{x_{n+1}-x_n}{2}, \frac{x_n+x_{n+1}}{2}+\delta_2\frac{x_{n+1}-x_n}{2}\right], \qquad n\in\mathbb Z.\]
Since $\lambda_n\in\tilde{E}_n$, (\ref{sep1})  implies $\inf_{n\in\mathbb Z}|x_n-\lambda_n|>0$, $\inf_{n\in\mathbb Z}|x_{n+1}-\lambda_n|>0$, and $\delta:=\inf_{n\in\mathbb Z}|\lambda_n-\lambda_{n+1}|>0$. Hence the set $\{\lambda_n\}_{n\in\mathbb Z}$ is separated and condition (i) of Theorem \ref{pavlov} is fulfilled.
\end{proof}

 Condition (ii) of Theorem \ref{pavlov} is now quite easy to establish, since the limit
\[F(z)=F(0)\lim_{R\to\infty} \prod_{|\lambda_n|< R}\left(1-\frac{z}{\lambda_n}\right)\]
is known to exist for all functions $F$ of the \emph{Cartwright class}, i.e. entire functions $F$ of exponential type such that
\begin{equation}
\label{cw}
\int\limits_{-\infty}^{\infty}\frac{\log^+ |F(t)|}{1+t^2}dt<\infty,
\end{equation}
see \cite{lev96}. 

\begin{lem}
$F$ belongs to the Cartwright class. 
\end{lem}
\begin{proof}
We have  to show the existence of the integral (\ref{cw}).  Relation (\ref{sep}) implies
\[|\lambda_n|\geq -|\lambda_0|+|n|\delta, \qquad n\in\mathbb Z,\]
and (\ref{sep3}) implies that also (\ref{eq2}) holds. 
For $t\in(\lambda_{n-1}, \lambda_{n})$ we know that $\varphi (t)$ is  on  a slit of $\Omega(s)$ such that $\Im \varphi(t)=n\pi$ and $\Re\varphi (t)\leq\log|c_{n}|$.   Hence by (\ref{ugl})
\[\log |F(t)|=\Re\varphi (t)\leq  C_2+\log |n|,\]
and also 
\[\log^+ |F(t)|\leq C_2+\log |n|\]
for all $|n|\geq n_0$ such that the right hand side of this inequality is positive. We obtain
\begin{eqnarray*}
&&\int\limits_{\lambda_{n-1}}^{\lambda_{n}}
\frac{\log^+|F(t)|}{1+t^2}dt\leq (C_2+\log |n|)\int\limits_{\lambda_{n-1}}^{\lambda_{n}}
\frac{dt}{1+t^2}
\leq\frac{(C_2+\log |n|)(\lambda_{n}-\lambda_{n-1})}{1+\min(|\lambda_{n-1}|,|\lambda_{n}|)^2}\\
&&\leq\frac{(C_2+\log |n|)\Delta}{1+(-|\lambda_0|+ (|n|-1)\delta)^2}\leq C_{13}\frac{\log |n|}{|n|^2},
\end{eqnarray*}
provided that $|n|\geq n_1$ where $n_1\geq n_0$ is such that $-|\lambda_0|+(n_1-1)\delta>0$. Consequently, 
\[\int\limits_{-\infty}^{\infty}\frac{\log^+ |F(t)|}{1+t^2}dt\leq \int\limits_{\lambda_{-n_1}}^{\lambda_{n_1+1}}\frac{\log^+ |F(t)|}{1+t^2}dt
+C_{13}\sum_{|n|\geq n_1} \frac{\log |n|}{|n|^2}<\infty.
\]\end{proof}
Next we have to show condition  (iii) of Theorem \ref{pavlov}, i.e. the Muckenhoupt condition (\ref{A2}). For that purpose we choose $y<0$ such that the line $\i y+\mathbb R$ is disjoint with all semicircles  $H_{n,n+1} \,(n\in\mathbb Z)$, which is possible in view of (\ref{sep3}). Due to (\ref{sep}) there is an integer  $N>0$ such that the intervals $I_n+\i y$ where $I_n:=[(x_{n-1}+x_n)/2, (x_n+x_{n+1})/2]$  lie inside $H_{n-N, n+N}$ for all $n\in\mathbb Z$.  Using again the Gehring-Hayman Theorem  we find that there is $C_{14}>0$ with
\[|\Re(\varphi(x_n)-\varphi(z))|<C_{14},\qquad z\in I_n+\i y, n\in\mathbb Z.\]
This implies immediately 
\[|{\rm e}^{\varphi (z)} |\leq  |c_n| {\rm e}^{C_{14}}\qquad \mbox{and }\qquad |{\rm e}^{-\varphi (z)} |\leq  |c_n|^{-1}{\rm e}^{C_{14}}\]
for $z\in I_n+\i y, n\in\mathbb Z$.

Given any interval $I\subset \mathbb R$ we denote by $I_p, \ldots, I_q$ all intervals among $\{I_n\}_{n\in \mathbb Z}$ that are not disjoint with $I$.  Further we find
\begin{equation*}
\int_{I_n} |F(x+\i y)|^2\,dx \leq c_n^2{\rm e}^{2C_{14}}\Delta_1, \qquad \int_{I_n} |F(x+\i y)|^{-2}\, dx \leq c_n^{-2}{\rm e}^{2C_{14}}\Delta_1, \qquad n=p, \ldots, q.
\end{equation*}
Adding these inequalities for $n=p,\ldots, q$ and multiplying the results yields
\begin{equation*}
\begin{split} \int_{I} |F(x+\i y)|^2\,dx
\int_{I} |F(x+\i y)|^{-2}\,dx
& \leq \sum_{n=p}^q \int_{I_n} |F(x+\i y)|^2\,dx
\sum_{n=p}^q \int_{I_n} |F(x+\i y)|^{-2}\,dx
\\
&\leq {\rm e}^{4C_{14}}\Delta_1^2 \sum_{n=p}^q c_n^2\sum_{n=p}^q c_n^{-2}.
\end{split}
\end{equation*}
First assume $q-p>1$. Since at least $(q-p-1)$ of the intervals $I_p, \ldots , I_q$ are contained in $I$ we have $|I|\geq \delta_1 (q-p-1)\geq \frac{\delta_1}{3}{(q-p+1)}$, and using the assumption (\ref{A2t}) we get
\begin{equation*}
 \int_{I} |F(x+\i y)|^2\,dx
\int_{I} |F(x+\i y)|^{-2}\,dx
\leq {\rm e}^{4C_{14}}\Delta_1^2 C\frac{9}{\delta_1^2}|I|^2,
\end{equation*}
and thus (\ref{A2}).

In the special case $p=q$ the interval $I$ is contained in $I_p=I_q$ and we can estimate
\begin{equation*}
 \int_{I} |F(x+\i y)|^2\,dx
\int_{I} |F(x+\i y)|^{-2}\,dx
\leq c_p^2{\rm e}^{2C_{14}}|I| c_p^{-2}{\rm e}^{2C_{14}}|I|= {\rm e}^{4C_{14}}|I|^2.
\end{equation*}
If finally $q=p+1$ then the interval  $I$ is contained  in $I_p\cup I_{p+1}$. Using $c_p^2/(4C)\leq c_{p+1}^2\leq 4C c_p^2$ this case can be handled in the following way:
\begin{equation*}
\begin{split}
 \int_{I} |F(x+\i y)|^2\,dx
\int_{I} |F(x+\i y)|^{-2}\,dx
&\leq (c_p^2+c_{p+1}^2){\rm e}^{2C_{14}}|I| (c_p^{-2}+c_{p+1}^{-2}){\rm e}^{2C_{14}}|I| \\ &\leq (1+4C)^2 {\rm e}^{4C_{14}}|I|^2.
\end{split}
\end{equation*}
The Muckenhoupt condition is hence always fulfilled and Theorem \ref{pavlov} implies that $\{\lambda_n\}_{n\in\mathbb Z}$ is a complete interpolating sequence. 

\medskip

Next we have to show that every 
{\cis}  $\{\lambda_n\}_{n\in\mathbb Z}$ can be obtained in this way. 
From (ii) of Theorem \ref{pavlov} we know that the numbers  $\{\lambda_n\}_{n\in\mathbb Z}$ 
are the zeros of an entire function $F$ of exponential type $\pi$, which also belongs to the Laguerre-P\'{o}lya class ${\cal LP}$. We can assume that $(-1)^nc_n\geq 0$ holds for the sequence $\{c_n\}_{n\in\mathbb Z}={\rm Cr}\, F$, otherwise we consider $-F$.  Hence we have the representation (\ref{expf}) with a conformal map $\varphi:\mathbb C_-\to \Omega({\rm Cr}\, F)$. Letting $R\to \infty$  in 
\[\prod_{|\lambda_n|<R}\left |1-\frac{\i y}{\lambda_n}\right|^2=\prod_{|\lambda_n|<R}\left (1+\frac{ y^2}{\lambda_n^2}\right)\geq \sum_{|\lambda_n|<R}\frac{y^2}{\lambda_n^2}, \qquad y\in \mathbb R,\]
gives
\[|F(\i y)|\geq |y|\left(\sum_{n\in\mathbb Z}\frac{1}{\lambda_n^2}\right)^{1/2}, \qquad y\in \mathbb R,\]
and hence $\displaystyle \lim_{y\to -\infty}\Re \varphi(\i y)=\lim_{y\to-\infty}\log |F(\i y)|=\infty$. 

It remains to show condition (\ref{A2t}) for the sequence $\{d_n\}_{n\in\mathbb Z}=\{c_n^2\}_{n\in\mathbb Z}$. Fix $y<0$ and recall that $w(x)=|F(x+\i y)|^2$ satisfies the Muckenhoupt condition (\ref{A2}) with a constant $C>0$. Next we need that the distance between neighboring slit endpoints is bounded. 

\begin{lem}
\label{lem4}
There is $C_{15}>0$ such that
\begin{equation}
\label{nachbarn}
|\log |c_n|-\log |c_{n+1}||\leq C_{15},\qquad\forall n\in\mathbb Z.
\end{equation}
\end{lem}
\begin{proof}
 By Lemma 6.5 of chapter VI of \cite{gar},  the function $\psi(x):=\frac{1}{2} \log w(x)=\log |F(x+\i y)|=\Re \varphi (x+\i y)$ belongs to the space BMO$(\mathbb R)$, i.e.
\[\|\psi\|_*:=\sup_I\frac{1}{|I|}\int_I |\psi(x)-\psi_I|dx<\infty, \qquad \psi_I:= \frac{1}{|I|}\int_I\psi (x)dx,\]
where the supremum is taken over all bounded intervals.  This implies (Ibid.,Theorem 1.2) that
\[\int_{\mathbb R}\frac{\log^+|F(x+\i y)|}{1+x^2}dx\leq \int_{\mathbb R}\frac{|\psi(x)|}{1+x^2}dx <\infty, \]
and the representation 
\begin{equation}
\label{darst}
\displaystyle
\log |F (x)|=B|y|+\sum_{n\in\mathbb Z}\log\left |\frac{\displaystyle1-(x-\i y)/(\lambda_n-\i y)}{\displaystyle 1-(x-\i y)/(\lambda_n+\i y)}\right|+ \frac{1}{\pi}\int_{\mathbb R} \frac{|y|}{|x-\i y -t|^2}\log|F(t+\i y)|dt, \quad x\in \mathbb R,
\end{equation}
with $ B:=\limsup_{t\to \infty}\log |F(\i t)|/t$ is valid, cf. section III.G.3 of \cite{koo}. 
This formula shows among other things that $\Re \varphi$ restricted to $\mathbb R$ has logarithmic singularities in the points $\lambda_n $, whereas on $\mathbb R+\i y$ this function is continuous. Therefore we cannot compare $\Re \varphi$ on $\mathbb R$ and $\mathbb R+\i y$ directly, but have to resort to mean values.  Let $\delta>0$ be  the separation constant defined in (\ref{sep}) and  let $I_x$ ($x\in\mathbb R$) be the interval of length $\delta$ centered at $x$.  By Lemma 1.1(b) of chapter VI of \cite{gar}\footnote{Inequality (\ref{ersteugl}) can be interpreted as a discrete analogue of this lemma. That it holds is particularly delicate in view of  an example of Avdonin \cite{avd} who constructed  a {\cis} so that $\psi$  oscillates so much that $\sup_{|x_1-x_2|\leq 1}|\psi (x_1)-\psi (x_2)|=\infty$.} we know that
\begin{equation}
\label{diff}
|\psi_{I_{x_1}}-\psi_{I_{x_2}}|\leq C_{16}\log(1+\frac {1}{\delta}|x_1-x_2|)
\end{equation} 
for all $x_1, x_2\in\mathbb R$ and some constant $C_{16}>0$. Let $x\in\mathbb R\setminus\{\lambda_n\}_{n\in\mathbb Z}$ and assume $\lambda_{n-1}<x<\lambda_n$ for some $n\in\mathbb Z$. Then  $|x-\lambda_k|\geq \delta (k-n) $ for $k\geq n$ and $|x-\lambda_k|\geq \delta(n-k-1)$ for $k\leq n-1$, so that  we find
\begin{multline*}
0>\sum_{k\in\mathbb Z\setminus \{n-1,n\}}\log \left |\frac{\displaystyle1-(x-\i y)/(\lambda_k-\i y)}{\displaystyle1-(x-\i y)/(\lambda_k+\i y)}\right|=\sum_{k\in\mathbb Z\setminus \{n-1,n\}}\frac{1}{2}\log\left(1-\frac{4y^2}{(\lambda_k-x)^2+4y^2}\right) \\
\geq \sum_{k=1}^\infty\log \left(1-\frac{4y^2}{k^2\delta^2+4y^2}\right)=:C_{17}>-\infty.
\end{multline*}
If $I\subset [\lambda_{n-1},\lambda_n]$ is an interval we obtain
\begin{multline*}
0>\int_I \log\left |\frac{\displaystyle1-(x-\i y)/(\lambda_{n-1}-\i y )}{\displaystyle1-(x-\i y)/(\lambda_{n-1}+\i  y)}\right|+\log\left |\frac{\displaystyle1-(x-\i y)/(\lambda_n-\i y)}{\displaystyle1-(x-\i y)/(\lambda_n+\i y)}\right|\,dx\\ 
\geq \int_{\lambda_{n-1}}^{\lambda_n} \log\left(1-\frac{4y^2}{(\lambda_{n-1}-x)^2+4y^2}\right)+\log \left(1-\frac{4y^2}{(\lambda_n-x)^2+4y^2}\right)dx\\ 
=4 y \arctan \frac{\lambda_{n-1}-\lambda_n}{2y}-2(\lambda_{n-1}-\lambda_n)\log \left(1-\frac{4y^2}{(\lambda_{n-1}-\lambda_n)^2+4y^2}\right)\geq C_{18}>-\infty, \end{multline*}
where $C_{17}$ is independent of $I$ and $n$ since the function
\[\lambda\mapsto 4 y \arctan \frac{\lambda}{2y}-2\lambda \log \left(1-\frac{4y^2}{\lambda^2+4y^2}\right)\]
is bounded on $\mathbb R$. Altogether we have shown that for intervals $I$ not containing any $\lambda_n$ in its interior
\begin{equation}
\label{blaschke}
0>\frac{1}{|I|}\int_I \sum_{n\in\mathbb Z}\log\left |\frac{\displaystyle1-(x-\i y)/(\lambda_n-\i y)}{\displaystyle 1-(x-\i y)/(\lambda_n+\i y)}\right| dx\geq C_{17}+ \frac{1}{|I|} C_{18} .
\end{equation}
For any  interval $I_{\tilde{x}}$ we have
\begin{equation}
\label{means}
\begin{split}
&\frac{1}{ |I_{\tilde{x}}|}  \int_{I_{\tilde{x}}} \frac{1}{\pi} \int_{\mathbb R} \frac{|y|}{|x-\i y-t|^2} \log |F(t+\i y)|\,dt\, dx\\
&=\frac{1}{ \delta}  \int_{I_{\tilde{x}}} \frac{1}{\pi} \int_{\mathbb R} \frac{|y|}{|\tilde{x}-\i y-s|^2} \log |F(s+x-\tilde{x}+\i y)|\,ds \,dx\\
&=\frac{1}{ \pi}  \int_{\mathbb R}  \frac{|y|}{|\tilde{x}-\i y-s|^2}\frac{1}{\delta}\int_{I_{\tilde{x}}}  \log |F(s+x-\tilde{x}+\i y)|\,dx \,ds =\frac{1}{\pi}\int_{\mathbb R}  \frac{|y|}{|\tilde{x}-\i y-s|^2}\psi_{I_s}\,ds,
\end{split}
\end{equation}
and the latter integral can be estimated using (\ref{diff}) in the following way
\begin{eqnarray*}
&&\left | \frac{1}{\pi}\int_{\mathbb R}  \frac{|y|}{|\tilde{x}-\i y-s|^2}\psi_{I_s}\,ds-\psi_{I_{\tilde{x}}}\right|\\
&&=\left|\frac{1}{\pi}\int_{\mathbb R} \frac{|y|}{|\tilde{x}-\i y-s|^2}\psi_{I_s}\,ds-
\frac{1}{\pi}\int_{\mathbb R}  \frac{|y|}{|\tilde{x}-\i y-s|^2}\psi_{I_{\tilde{x}}}\,ds \right|\\
&&\leq \frac{1}{\pi}\int_{\mathbb R}\frac{|y|C_{16}\log (1+\frac{1}{\delta}|\tilde{x}-s|)}{|\tilde{x}-\i y -s|^2}\,ds
=\frac{C_{16}}{\pi}\int_{\mathbb R} \frac{|y|\log(1+t/\delta)}{t^2+y^2}dt=:C_{19}<\infty.
\end{eqnarray*}
Combining this with (\ref{darst}), (\ref{blaschke}), and (\ref{means}) we arrive at 
\[ \left|\frac{1}{|I_{\tilde{x}}|}\int_{I_{\tilde{x}}} \log |F(x)|\,dx-\psi_{I_{\tilde{x}}}\right|\leq B|y|+|C_{17}|+\frac{1}{\delta} | C_{18}|+C_{19}=:C_{20}\]
for all intervals $I_{\tilde{x}}$ of length $\delta$ centered at $\tilde{x}$ and not containing any $\lambda_n$ in its interior. Assume that $\tilde{x}_n$ is such that $I_{\tilde{x}_n}\subset [\lambda_{n-1}, \lambda_n]$.  Then we have $\Re \varphi(x)\leq \log |c_{n}|$ for all $x\in I_{\tilde{x}_n}$ and thus
\[\frac{1}{|I_{\tilde{x}_n}|}\int_{I_{\tilde{x}_n}}\log |F(x)|dx\leq \log |c_n|, \qquad n\in \mathbb Z,\]
and also
\begin{equation}
\label{lower}
\psi_{I_{\tilde{x}_n}}-C_{20}\leq\log |c_n|, \qquad n\in \mathbb Z.
\end{equation}
In order to get an upper bound  for $\log |c_n|$ we  recall that $\Re \varphi (x_n)=\log |c_n|$ for  some $x_n\in (\lambda_{n-1},\lambda_n)$ and hence (\ref{darst}) yields
\begin{equation}
\label{upper}
\log |{c_n}|\leq B |y|+\frac{1}{\pi}\int_{\mathbb R} \frac{|y|}{|x_n-\i y -t|^2}\log |F(t+\i y)|\,dt.
\end{equation}
Now we compute
\begin{multline*}
\left |\frac{1}{\pi}\int_{\mathbb R}\frac{|y|}{|x_n-\i y -t|^2}\log |F(t+\i y)|\,dt-\psi_{I_{\tilde{x}_n}}\right|
=\left |\frac{1}{\pi}\int_{\mathbb R}\frac{|y|}{|x_n-\i y -t|^2}\left(\psi(t)-\psi_{I_{\tilde{x}_n}} \right)\,dt \right|\\
 \leq \frac{1}{\pi}\sum_{k\in\mathbb Z}  \int\limits_{I_{\tilde{x}_n}+k\delta} \frac{|y|}{(x_n-t)^2+y^2} \left(\left |\psi(t)-\psi_{I_{\tilde{x}_n}+k\delta}\right| +\left| \psi_{I_{\tilde{x}_n}+k\delta}- \psi_{I_{\tilde{x}_n}} \right|\right)\,dt. 
\end{multline*}
For $t\in I_{\tilde{x}_n}+k\delta$ and $ k\geq 0$ we have  $t-x_n\geq (\lambda_{n-1}+k\delta)-\lambda_n\geq k\delta-\Delta$, where $\Delta$ is defined in  (\ref{eq2}). For $k\leq 0$ we find similarly $x_{n}-t\geq-k\delta-\Delta$. Thus for all $k$ with $|k| >k_0:=\Delta/\delta$ we have  $|x_n-t|\geq |k|\delta-\Delta>0$ and we can continue the previous chain of inequalities using (\ref{diff})
\begin{eqnarray*}
\ldots &\leq & \frac{1}{\pi}\sum_{|k|\leq k_0}\frac{\delta}{|y|}\left(\frac{1}{\delta}\int\limits_{I_{\tilde{x}_n}+k\delta}
\left |\psi(t)-\psi_{I_{\tilde{x}_n}+k\delta}\right|\,dt+ C_{16}\log(1+|k|)\right)
\\  &&+ \frac{1}{\pi}\sum_{|k| > k_0}\frac{|y|\delta}{(|k|\delta-\Delta)^2+y^2} 
\left(\frac{1}{\delta}\int\limits_{I_{\tilde{x}_n}+k\delta}
\left |\psi(t)-\psi_{I_{\tilde{x}_n}+k\delta}\right|\,dt+ C_{16}\log(1+|k|)\right)\\
&\leq & \frac{\|\psi\|_*}{\pi}\left(\sum_{|k|\leq k_0}\frac{\delta}{|y|}+\sum_{|k|>k_0}\frac{|y|\delta}{(|k|\delta-\Delta)^2+y^2}\right)
\\ &&+\frac{C_{16}}{\pi}\left(\sum_{|k|\leq k_0}\frac{\delta}{|y|}\log (1+|k|)+\sum_{|k|>k_0}\frac{|y|\delta\log (1+|k|)}{(|k|\delta -\Delta)^2+y^2}\right)=:C_{21}<\infty.
\end{eqnarray*} 
In view of  (\ref{upper}) we get thus the upper estimate
\[\log |c_n|\leq B|y|+\psi_{I_{\tilde{x}_n}}+ C_{21}, \qquad n\in\mathbb Z.\]
Considering also (\ref{lower}) we find
\begin{equation}
\label{ungl3}
 |\log |c_n|-\psi_{I_{\tilde{x}_n}}|\leq C_{22}, \qquad n\in\mathbb Z,
 \end{equation}
where $C_{22}:=\max (C_{20},B|y|+C_{21})$. Using once more (\ref{diff}) we conclude
(\ref{nachbarn})
with the constant $C_{15}:=2C_{22}+C_{16}\log(1+2\Delta/\delta)$, and the lemma is proved. 
\end{proof}

Condition  (\ref{eq2}) implies that $\sup_{n\in\mathbb Z}|x_{n+1}-x_n|\leq 2\Delta$, and hence $\mathbb R+\i y$ is disjoint with all semicircles $H_{n,n+1}$ connecting $x_n$ and $x_{n+1}$ in $\mathbb C_-$, provided that $y$ had been chosen so that $|y|>2\Delta$. Condition (\ref{sep}) makes it possible to choose $N>0$ so large that the intervals $[\lambda_{n-1}, \lambda_n]+\i y$ lie inside the semicircles $H_{n-N,n+N}$ for all $n\in\mathbb Z$. The Gehring-Hayman Theorem and (\ref{nachbarn}) give the estimate
\[{\rm length}\,(\varphi(H_{n,n+1}))\leq C_9\sqrt{\pi^2+C_{15}^2},\qquad
 {\rm length}\,(\varphi(H_{n-N,n+N}))\leq 2N C_9\sqrt{\pi^2+C_{15}^2}.
 \]
 Hence 
 \[|\Re (\varphi(x_n)-\varphi(z))|<C_{23}, \qquad z\in [\lambda_{n-1}, \lambda_n]+\i y, n\in\mathbb Z,\]
 with a constant $C_{23}>0$,
 and also
 \[|c_n|\leq {\rm e}^{C_{23}}\left | {\rm e}^{\varphi (z)}\right|, \quad 
 |c_n|^{-1} \leq {\rm e}^{C_{23}}\left | {\rm e}^{-\varphi (z)}\right|, \qquad 
 z\in [\lambda_{n-1}, \lambda_n]+\i y, n\in\mathbb Z.\]
 For  any set $I=\{p,\ldots, q\}$  of consecutive integers we exploit (\ref{A2}) for the interval $[\lambda_{p-1}, \lambda_q]$ to obtain
 \begin{eqnarray*}
 &&
 \sum_{n=p}^q c_n ^2\sum_{n=p}^q c_n^{-2}\leq
 \sum_{n=p}^q\frac{1}{\lambda_n-\lambda_{n-1}}\int\limits_{\lambda_{n-1}}^{\lambda_n}{\rm e}^{2C_{23}} \left |{\rm e}^{\varphi (x+\i y)}\right|^2\,dx \cdots\\
 &&
 \sum_{n=p}^q\frac{1}{\lambda_n-\lambda_{n-1}}\int\limits_{\lambda_{n-1}}^{\lambda_n}{\rm e}^{2C_{23}} \left |{\rm e}^{-\varphi (x+\i y)}\right|^2\,dx \leq
 \frac{{\rm e}^{4C_{23}}}{\delta^2} \int\limits_{\lambda_{p-1}}^{\lambda_q} w(x)dx \int\limits_{\lambda_{p-1}}^{\lambda_q} \frac{1}{w(x)} dx  \\
 &&\leq \frac{{\rm e}^{4C_{23}}}{\delta^2} C (\lambda_q-\lambda_{p-1})^2\leq \frac{{\rm e}^{4C_{23}}}{\delta^2}C \Delta^2 |I|^2,
  \end{eqnarray*}
  i.e. (\ref{A2t}) is established and we are done.
 \end{proof}

\section{Sufficient conditions for com\-plete inter\-pola\-ting se\-quen\-ces}
 As before  let  $\{c_n\}_{n\in\mathbb Z}$ be the sequence of critical values of an entire function in the Laguerre-P\'{o}lya class $ {\cal LP}$. It is easy to see that the condition  (\ref{erecond})   with  constants $C,c>0$ implies the discrete Muckenhoupt condition  (\ref{A2t}) for the numbers $d_n=c_n^2$. In view of the criterion of Eremenko and Sodin in Theorem \ref{eresodthm}, Theorem \ref{levinthm} is contained on Theorem \ref{charthm}. This can be regarded as the discrete analogue of the observation that Theorem \ref{pavlov} contains Theorem \ref{levinthm} as a special case, since  sine-type functions are easily seen to fulfill the Muckenhoupt condition (\ref{A2}). 
 
 On the other hand, the sequence 
 \[d_n=(1+|n|)^{2\alpha}, \qquad \mbox{$ -\frac{1}{2}<\alpha<\frac{1}{2},$}\]
 is not bounded away from $0$ (for $-\frac{1}{2}<\alpha<0$) or $\infty$ (for $0<\alpha<\frac{1}{2}$), but satisfies (\ref{A2t}).  Hence the generating function of a complete interpolating sequence is not always of sine-type. That the discrete Muckenhoupt condition is indeed fulfilled for this example follows from the subsequent sufficient condition. We  connect two functions by the symbol $\asymp$ if their quotient is always between certain positive constants. 
 
 \begin{lem}
 \label{wachstum}
 If the condition  $d_n\asymp(1+|n|)^{2 \alpha}$ holds for a sequence $\{d_n\}_{n\in\mathbb Z} $ and some   $\alpha\in\left (-\frac{1}{2},\frac{1}{2}\right)$, then the discrete Muckenhoupt condition {\rm  (\ref{A2t})} is fulfilled. 
 \end{lem}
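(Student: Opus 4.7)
The plan is to reduce to an elementary calculation. Since $d_n \asymp (1+|n|)^{2\alpha}$, the two sums in (\ref{A2t}) change only by multiplicative constants if $d_n$ is replaced by $(1+|n|)^{2\alpha}$ exactly; moreover, the symmetry $\alpha\leftrightarrow -\alpha$ interchanges the two sums, so I may assume $\alpha\in[0,1/2)$. Fix a set of consecutive integers $I=\{p,p{+}1,\ldots,q\}$.

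The first step is a sum-to-integral comparison. Since $(1+|x|)^{\pm 2\alpha}$ is monotone on each of $[0,\infty)$ and $(-\infty,0]$, for an interval $I\subset[0,\infty)$ I have, up to constants depending only on $\alpha$,
\[\sum_{n\in I}(1+n)^{2\alpha}\asymp\frac{Q^{s}-P^{s}}{s},\qquad \sum_{n\in I}(1+n)^{-2\alpha}\asymp\frac{Q^{t}-P^{t}}{t},\]
with $s=2\alpha+1\in[1,2)$, $t=1-2\alpha\in(0,1]$, and endpoints $0\le P\le Q$ satisfying $Q-P\asymp|I|$ (e.g.\ $P=p+1,\,Q=q+2$; the small boundary contributions at the endpoints of $I$ and near $n=0$ are absorbed into the implicit constants, as $|I|^{s},|I|^{t}\le|I|^{2}$ for $|I|\ge 1$).

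The decisive algebraic step is the inequality
\[(Q^{s}-P^{s})(Q^{t}-P^{t})\le(Q-P)^{2}\qquad\text{whenever } 0\le P\le Q\text{ and } s+t=2,\]
which follows by expanding the left-hand side as $Q^{2}+P^{2}-(Q^{s}P^{t}+Q^{t}P^{s})$ and applying the arithmetic--geometric mean inequality $Q^{s}P^{t}+Q^{t}P^{s}\ge 2\sqrt{Q^{s+t}P^{s+t}}=2QP$. Combined with the estimates above this proves the discrete Muckenhoupt condition for $I\subset[0,\infty)$; the case $I\subset(-\infty,0]$ follows at once by the symmetry $n\mapsto -n$.

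Finally, for an interval $I$ that straddles the origin I write $I=I_-\cup I_+$ (disjoint) and expand
\[\biggl(\sum_{n\in I}d_n\biggr)\biggl(\sum_{n\in I}d_n^{-1}\biggr)=A_-B_-+A_+B_++A_-B_++A_+B_-,\]
with the obvious notation. The diagonal terms $A_{\pm}B_{\pm}$ are bounded by $C_\alpha|I_{\pm}|^{2}\le C_\alpha|I|^{2}$ by the case already treated. For the cross terms I use the cruder bounds $A_{\pm}\le C_\alpha|I_{\pm}|^{s}$ and $B_{\pm}\le C_\alpha|I_{\pm}|^{t}$, which give $A_-B_+\le C_\alpha|I_-|^{s}|I_+|^{t}\le C_\alpha(|I_-|+|I_+|)^{s+t}=C_\alpha|I|^{2}$, and analogously for $A_+B_-$. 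The only real obstacle is the routine bookkeeping of constants and boundary terms in the sum-to-integral comparison; the conceptual heart is the one-line AM--GM inequality of step two.
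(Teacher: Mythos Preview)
Your argument is correct and follows essentially the same route as the paper: reduce to the exact power sequence, compare sums to integrals, and for $I$ on one side of the origin use the identity $(Q^{s}-P^{s})(Q^{t}-P^{t})=Q^{2}+P^{2}-(Q^{s}P^{t}+Q^{t}P^{s})$ together with the AM--GM bound $Q^{s}P^{t}+Q^{t}P^{s}\ge 2QP$ --- this is precisely the left inequality of the paper's auxiliary Lemma~\ref{ungl}. The only genuine difference is in the straddling case: the paper bounds the cross terms via the \emph{right} inequality of Lemma~\ref{ungl}, namely $a^{s}b^{t}+a^{t}b^{s}\le a^{2}+b^{2}$, whereas you use the cruder but perfectly adequate $a^{s}b^{t}\le(a+b)^{s+t}$; both yield a bound of order $|I|^{2}$, so this is a cosmetic variation rather than a different idea.
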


\begin{proof}
The assumption   $d_n\asymp(1+|n|)^{2\alpha}$ means that there are constants $C_1, C_2>0$ such that
\[C_1(1+|n|)^{2\alpha}\leq d_n \leq C_2 (1+|n|)^{2\alpha}, \qquad \forall n\in \mathbb Z.\]
For every set $I=\{p,\ldots, q\}$ of consecutive integers we can estimate
\begin{eqnarray*}
\sum_{n\in I} d_n\sum_{n\in I}d_n^{-1}&\leq & C_2 \sum_{n =p}^q(1+|n|)^{2\alpha}\cdot C_1^{-1}\sum_{n=p}^q(1+|n|)^{-2\alpha}\\ &\leq &  C_2 C_1^{-1}\int\limits_{p-1}^{q+1} (1+|x|)^{2\alpha}dx \int\limits_{p-1}^{q+1}(1+|x|)^{-2\alpha}dx.
\end{eqnarray*}
First consider the case $0<p<q$. Using the left inequality proved in Lemma \ref{ungl} below we find 
\begin{eqnarray*}
\sum_{n\in I} d_n \sum_{n\in I}d_n^{-1}&\leq &\frac{C_2}{C_1 (1-2\alpha)(1+2\alpha)}[(2+q)^{1-2\alpha}-p^{1-2\alpha}][(2+q)^{1+2\alpha}-p^{1+2\alpha}]\\
&=&C_3 [(2+q)^2-p^{1-2\alpha} (2+q)^{1+2\alpha}-(2+q)^{1-2\alpha}p^{1+2\alpha}+p^2]\\
&\leq & C_3 [(2+q)^2-2(2+q)p+p^2]=C_3(q-p+2)^2\\
&\leq& 4C_3(q-p+1)^2=4C_3|I|^2
\end{eqnarray*}
In case $p\leq 0< q$ we use the right inequality of Lemma \ref{ungl} to obtain
\begin{eqnarray*}
\sum_{n\in I} d_n\sum_{n\in I}d_n^{-1}&\leq &
C_3 [(2+q)^{1-2\alpha}+(|p|+1)^{1-2\alpha}][(2+q)^{1+2\alpha}+(|p|+1)^{1+2\alpha}]\\
&=&C_3 [(2+q)^2+(1-p)^{1-2\alpha} (2+q)^{1+2\alpha}+(2+q)^{1-2\alpha}(1-p)^{1+2\alpha}\\
&& +(1-p)^2] \leq  2 C_3 [(2+q)^2+(1-p)^2]\leq 2 C_3(q-p+3)^2\\
&\leq&  18 C_3(q-p+1)^2=18 C_3|I|^2.
\end{eqnarray*}
The remaining cases are similar. 
  \end{proof}
  
  The preceding lemma leads to a sufficient condition for {\cis}s which is analogous to the following  generalization of Theorem \ref{levinthm}.
  \begin{thm}
A sequence $\{\lambda_n\}_{n\in\mathbb Z}\subset \mathbb R$ is a {\cis} if conditions {\rm (i), (ii)} of Theorem \ref{pavlov} hold as well as 
\[|F(x+\i y)|\asymp (1+|x|)^\alpha, \qquad -\frac{1}{2}<\alpha<\frac{1}{2}.\]
\end{thm}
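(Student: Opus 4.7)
The plan is to verify condition (iii) of Theorem \ref{pavlov}---the continuous Muckenhoupt condition (\ref{A2}) for the weight $w(x)=|F(x+\i y)|^2$---and then invoke Pavlov's theorem, since (i) and (ii) are supplied by hypothesis. By the growth assumption $w(x)\asymp (1+|x|)^{2\alpha}$, so it suffices to show that
\[\int_I (1+|x|)^{2\alpha}\,dx\int_I (1+|x|)^{-2\alpha}\,dx\le C\,|I|^2\]
for every bounded interval $I\subset\mathbb R$, with a constant $C>0$ independent of $I$. This is the classical statement that the power weight $(1+|x|)^{2\alpha}$ belongs to $A_2(\mathbb R)$ for $-\frac{1}{2}<\alpha<\frac{1}{2}$, and it can be proved by a computation entirely parallel to the one carried out in Lemma \ref{wachstum}.

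Concretely I would split into three cases according to the position of $I$ relative to the origin. For $I=[p,q]\subset [0,\infty)$ the antiderivatives $\frac{1}{1\pm 2\alpha}(1+x)^{1\pm 2\alpha}$ are explicit, and after multiplying the two integrals one obtains
\[\frac{1}{1-4\alpha^2}\bigl[(1+q)^2+(1+p)^2-(1+p)(1+q)\bigl(r^{2\alpha}+r^{-2\alpha}\bigr)\bigr]\]
with $r=(1+q)/(1+p)\ge 1$. The elementary AM--GM inequality $r^{2\alpha}+r^{-2\alpha}\ge 2$ bounds the bracket by $\bigl((1+q)-(1+p)\bigr)^2=(q-p)^2$, which gives the desired estimate. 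The case $I\subset(-\infty,0]$ is symmetric, and if $0\in I$ one splits $I$ at the origin, applies the previous cases to each half, and absorbs the cross terms into $|I|^2$ in the same way as at the end of Lemma \ref{wachstum}.

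There is essentially no obstacle: the restriction $-\frac{1}{2}<\alpha<\frac{1}{2}$ is exactly what guarantees local integrability of both $(1+|x|)^{\pm 2\alpha}$ and keeps the exponents $1\pm 2\alpha$ positive, so the telescoping argument from Lemma \ref{wachstum} transfers verbatim from sums to integrals. Once (\ref{A2}) is established, Theorem \ref{pavlov} yields that $\{\lambda_n\}_{n\in\mathbb Z}$ is a \cis. An alternative route would be to combine Theorem \ref{charthm} with Lemma \ref{wachstum} by first deducing $c_n^2\asymp (1+|n|)^{2\alpha}$ from inequality (\ref{ungl3}) together with $|\lambda_n|\asymp |n|$, but this is more roundabout than invoking (\ref{A2}) directly.
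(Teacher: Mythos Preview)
Your argument is correct: verifying that the power weight $(1+|x|)^{2\alpha}$ lies in $A_2(\mathbb R)$ for $|\alpha|<\tfrac12$ and then applying Theorem~\ref{pavlov} is precisely the intended route, and your computation for $I\subset[0,\infty)$ (together with the symmetric and mixed cases) is clean and complete.

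There is, however, nothing to compare against: the paper does \emph{not} supply its own proof of this theorem. It is stated as a known result---Pavlov attributes it to a private communication of Katsnelson, and the paper also points to the more general results of Avdonin and Sedletskii---and is included only to motivate the discrete analogue (the subsequent corollary), which \emph{is} proved via Theorem~\ref{charthm} and Lemma~\ref{wachstum}. Your proof is thus the standard one that the paper takes for granted, and your observation that it is the continuous counterpart of the Lemma~\ref{wachstum} computation (with Lemma~\ref{ungl} replaced by the AM--GM step $r^{2\alpha}+r^{-2\alpha}\ge 2$) captures exactly the parallel the paper is drawing.
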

Pavlov \cite{pav} attributes this statement to a private  communication of Katsnelson, and it is also contained in  more general results by  Avdonin \cite{avd} and Sedletskii \cite{sed75}. \cite{lev96} contains an example which shows that the theorem becomes false for $|\alpha|\geq \frac{1}{2}$. Of course, the case $\alpha=0$ reduces to Theorem \ref{levinthm}. 

Theorem \ref{charthm} and 
Lemma \ref{wachstum} imply the following discrete analogue. 
\begin{cor}
Let $s=\{c_n\}_{n\in\mathbb Z}$ be a sequence with $(-1)^nc_n\geq 0$ and
 \[|c_n|\asymp (1+|n|)^\alpha, \qquad -\frac{1}{2}<\alpha<\frac{1}{2},\]
 and let $\varphi:\mathbb C_-\to \Omega(s)$ be a conformal map with $\lim_{y\to-\infty} \Re (\i y)=\infty$. If  $\varphi $ is appropriately normalized then   the entire function $F$ in {\rm (\ref{expf})} has exponential type $\pi$, and 
$\{\lambda_n\}_{n\in\mathbb Z}=F^{-1}(0)$ is a {\cis}. 
\end{cor}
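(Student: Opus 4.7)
The plan is simply to chain Lemma \ref{wachstum} with Theorem \ref{charthm}. First I would set $d_n := c_n^2$ and observe that the hypothesis $|c_n|\asymp (1+|n|)^\alpha$ squares to $d_n \asymp (1+|n|)^{2\alpha}$. Since $-\frac{1}{2}<\alpha<\frac{1}{2}$, the exponent $2\alpha$ lies in the admissible range $(-1,1)$ of Lemma \ref{wachstum}, so that lemma applies and yields the discrete Muckenhoupt condition (\ref{A2t}) for $\{d_n\}_{n\in\mathbb Z}$.

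With (\ref{A2t}) in hand, and noting that the sign condition $(-1)^n c_n \geq 0$ and the normalization $\lim_{y\to-\infty}\Re\varphi(\i y)=\infty$ are assumed exactly as required, I would then feed everything into Theorem \ref{charthm}. That theorem produces an entire function $F$ of exponential type given by (\ref{expf}); after rescaling $\varphi$ by a positive constant $a$ (which preserves its being a conformal map of $\mathbb{C}_-$ onto $\Omega(s)$ as well as the normalization at $-\i\infty$), the exponential type can be adjusted to exactly $\pi$, and the zero set $\{\lambda_n\}_{n\in\mathbb Z}=F^{-1}(0)$ is then a complete interpolating sequence.

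No real obstacle is expected here: the corollary is advertised precisely as the direct consequence of the lemma plus the main theorem, and all hypotheses match one-to-one.
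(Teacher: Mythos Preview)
Your proposal is correct and matches the paper's own approach exactly: the corollary is stated immediately after the sentence ``Theorem \ref{charthm} and Lemma \ref{wachstum} imply the following discrete analogue,'' and no further argument is given there. Your chaining of $|c_n|\asymp(1+|n|)^\alpha \Rightarrow d_n=c_n^2\asymp(1+|n|)^{2\alpha}$, invoking Lemma \ref{wachstum} for (\ref{A2t}), and then applying Theorem \ref{charthm} (with the rescaling to achieve type $\pi$) is precisely what the paper intends.
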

It is not difficult to see that this statement  becomes false for other values of $\alpha$ since also Lemma \ref{wachstum} fails in this case. 

Next  we remark that the condition $\lim_{y\to-\infty} \Re (\i y)=\infty$ in Theorem \ref{charthm} determines $\varphi$  up to conformal self-maps of $\mathbb C_-$ that fix $\mathbb \infty$.  Hence $\varphi(az+b), a>0, b\in\mathbb R$ describes the set of all conformal mappings of $\mathbb C_-$ onto $ \Omega(s)$ with this condition  if $\varphi$ is one such mapping.  Though we know that $F(z)={\rm e}^{\varphi(az+b)}$ has exponential type, it is not clear for which $a>0$ the type is equal to $\pi$. Obviously, the type does not depend on $b$, whereas it is a linear function of $a$. Hence there is exactly one $a>0$ such that the type of $F$ is equal to $\pi$.  In order to indentify this value we introduce the \emph{upper} and \emph{lower density} of a sequence $\{\lambda_n\}_{n\in\mathbb Z}$ by  
\[D^+:=\lim_{r\to \infty} \max_{x\in\mathbb R}\frac{|\{\lambda_n\}\cap [x-r,x+r)|}{2r}, \qquad 
D^-:=\lim_{r\to \infty} \min_{x\in\mathbb R}\frac{|\{\lambda_n\}\cap [x-r,x+r)|}{2r}.\]
 It follows from results of Landau \cite{lan} that $D^+=D^-=1$ is necessary (but not sufficient) for {\cis}s.  The densities of $F^{-1}(0)$, regarded as a function of $a$, are of the form const$/a$.  Hence the density conditions are sufficient to identify the right conformal mapping.
 \begin{cor}
 A sequence $\{\lambda_n\}_{n\in\mathbb Z}\subset \mathbb R$ is a {\cis} if and only if it is the zero set of $F$ in {\rm (\ref{expf})}, and $D^+=1$ ($D^-=1$), where $\varphi:\mathbb C_-\to\Omega(s)$ is a conformal map and $\{d_n\}_{n\in\mathbb Z}=\{c_n^2\}_{n\in\mathbb Z}$ satisfies {\rm (\ref{A2t})}.
  \end{cor}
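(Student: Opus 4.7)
My plan is to split the equivalence into its two implications, with the forward direction being essentially a consolidation of facts already established.

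For the implication $(\Rightarrow)$, if $\{\lambda_n\}$ is a complete interpolating sequence, I would appeal directly to Theorem \ref{charthm} to obtain a conformal map $\varphi:\mathbb C_-\to\Omega(s)$ (with $\lim_{y\to-\infty}\Re\varphi(\i y)=\infty$) such that $\{\lambda_n\}$ is the zero set of $F=e^\varphi$, the function $F$ has exponential type $\pi$, and $\{d_n\}=\{c_n^2\}$ fulfills (\ref{A2t}). The density identities $D^+=D^-=1$ are then precisely Landau's classical necessary conditions that were cited just before the corollary statement.

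For the converse $(\Leftarrow)$, the key idea is that the one-parameter family of admissible normalizations $\varphi(az+b)$, $a>0$, $b\in\mathbb R$, is already pinned down by the density condition on the zero set. I would invoke Theorem \ref{charthm} once more to obtain a unique $a_0>0$ such that $F_{a_0}(z):=e^{\varphi(a_0 z)}$ has exponential type $\pi$; by that theorem its zero set, which is exactly $\{\lambda_n/a_0\}$, is a complete interpolating sequence. Landau's density theorem, applied to this CIS, then gives $D^{\pm}(\{\lambda_n/a_0\})=1$. A direct change-of-variables yields the scaling law $D^{\pm}(\{\lambda_n/a_0\})=a_0\,D^{\pm}_\lambda$, obtained simply by noting that the count of $\lambda_n/a_0$ in an interval of radius $r$ centered at $x$ equals the count of $\lambda_n$ in an interval of radius $a_0 r$ centered at $a_0 x$. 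The hypothesis $D^+_\lambda=1$ (or $D^-_\lambda=1$) then forces $a_0=1$, so that $F$ itself is of exponential type $\pi$ and $\{\lambda_n\}$ is a complete interpolating sequence.

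There is no real obstacle beyond bookkeeping: Theorem \ref{charthm} supplies existence of the correct $a_0$, the Landau result supplies the value of the density for a CIS, and the density scaling is elementary. The conceptual content is simply that exponential type scales linearly in $a$ while density scales reciprocally in $a$, so a single normalization condition on either quantity determines the correct conformal map.
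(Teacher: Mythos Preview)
Your proposal is correct and follows essentially the same argument as the paper: the paper does not give a separate proof of this corollary, but the discussion immediately preceding it is exactly your reasoning---Theorem~\ref{charthm} plus Landau's density result, combined with the observations that the exponential type scales linearly in $a$ while the densities $D^{\pm}$ scale like $\mathrm{const}/a$, so that a single density condition pins down the normalization. Your explicit scaling computation $D^{\pm}(\{\lambda_n/a_0\})=a_0\,D^{\pm}_\lambda$ just makes precise what the paper summarizes in one line.
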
 
  
  Now it becomes clear what was meant by a parameterization of the set of {\cis}s by independent parameters. For any sequence $\{d_n\}_{n\in\mathbb Z}$ of positive real numbers satisfying (\ref{A2t}) there is a unique sequence $s=\{c_n\}_{n\in\mathbb Z}$ with $c_n^2=d_n$ and $(-1)^nc_n\geq 0$.  Let 
  $\varphi:\mathbb C_-\to\Omega(s)$ be a conformal map such that $D^+=D^-=1$ holds for $\{\lambda_n\}_{n\in\mathbb Z}:=\varphi^{-1}(-\infty)$. Then $\{\lambda_n\}_{n\in\mathbb Z}$ is uniquely determined up to shifts $\lambda\mapsto \lambda+b, b\in\mathbb R$. If we fix one value of the sequence $\{\lambda_n\}_{n\in\mathbb Z}$, this sequence is even unique.  Conversely, if we start with a {\cis} $\{\lambda_n\}_{n\in\mathbb Z}$, the generating function and  hence also the sequence of its   critical values is unique up to multiplication by a positive constant. Thus we have proved
  \begin{cor}
  Let $\lambda\in\mathbb R,\; d>0$. There is a one-to-one correspondence between all positive sequences $\{d_n\}_{n\in\mathbb R}$ with {\rm (\ref{A2t})} and $d_0=d$, and all {\cis}s $\{\lambda_n\}_{n\in\mathbb Z}$ with $\lambda_0=\lambda$. 
  \end{cor}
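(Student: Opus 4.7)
The plan is to extract both directions of the correspondence from the preceding corollary and then observe they are mutually inverse; all the real analytic work has already been done in Theorem \ref{charthm}, so only normalization needs to be tracked.

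For the direction $\{d_n\}\mapsto\{\lambda_n\}$, given a positive sequence $\{d_n\}$ satisfying (\ref{A2t}) with $d_0 = d$, I form the unique alternating sequence $s = \{c_n\}$ with $(-1)^n c_n \geq 0$ and $c_n^2 = d_n$. The preceding corollary then supplies a conformal map $\varphi:\mathbb C_-\to\Omega(s)$, normalized so that $D^+=D^-=1$ for $\{\lambda_n\}:=\varphi^{-1}(-\infty)$; the resulting CIS is determined up to the real shifts $\lambda_n\mapsto\lambda_n+b$, and this ambiguity is pinned down uniquely by requiring $\lambda_0 = \lambda$. For the reverse direction, given a CIS $\{\lambda_n\}$ with $\lambda_0 = \lambda$, I form the canonical generating function $\tilde F$ via (\ref{gf}); it belongs to $\mathcal{LP}$ and has type exactly $\pi$, so by Theorem \ref{charthm} its squared critical values $\tilde d_n = \tilde c_n^2$ satisfy (\ref{A2t}). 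Since $\tilde d_0$ need not equal $d$, I replace $\tilde F$ by $\mu\tilde F$ with $\mu := \sqrt{d/\tilde d_0}>0$; this preserves the zero set and the scale-invariant Muckenhoupt condition while multiplying every $\tilde d_n$ by $\mu^2$, producing $\{d_n\}$ with $d_0 = d$.

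For mutual invertibility, I will observe that two real entire functions of exponential type $\pi$ in $\mathcal{LP}$ sharing the real zero set $\{\lambda_n\}$ can differ only by a factor $c\,e^{\beta z}$ with $c,\beta\in\mathbb R$; the type constraint forces $\beta = 0$, and the sign convention $(-1)^n c_n \geq 0$ then forces $c > 0$. Hence $F = e^{\varphi}$ from the forward direction and the canonical $\tilde F$ of the resulting $\{\lambda_n\}$ are proportional by a positive factor, and the rescaling constant in the reverse direction recovers this factor exactly because of the common normalization $d_0 = d$; the other composition is analogous, using that $\varphi$ is fixed up to $z\mapsto az+b$ with $a$ determined by the density condition and $b$ by $\lambda_0 = \lambda$. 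The main obstacle is only this bookkeeping of three independent layers of ambiguity — the scaling $a$, the shift $b$, and the overall multiplicative constant on $F$ — but each will be removed by exactly one of the prescribed normalizations ($D^\pm = 1$, $\lambda_0=\lambda$, and $d_0=d$ respectively), so the corollary follows at once from the preceding one.
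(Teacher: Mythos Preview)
Your proposal is correct and follows essentially the same approach as the paper, whose proof is the short paragraph immediately preceding the corollary: both directions are read off from Theorem~\ref{charthm} and the preceding corollary, with the three normalizations $D^\pm=1$, $\lambda_0=\lambda$, $d_0=d$ removing the ambiguities in $a$, $b$, and the multiplicative constant respectively. You supply more detail on mutual invertibility (the $ce^{\beta z}$ factorization argument) than the paper does, which simply asserts that the generating function is unique up to a positive constant.
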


\section{Connection with the discrete Muckenhoupt condition  for values of the derivative}
 As mentioned in section \ref{sec1}, the discrete Muckenhoupt condition (\ref{A2t}) was studied by Lyubarskii and Seip \cite{lyuseip} for the numbers $|F'(\lambda_n)|^2$. It is intuitively clear, that there is a connection between the values $\{c_n\}_{n\in\mathbb Z}$ of $F$ at its critical points and the magnitude of the derivative at the zeros of $F$.  This will be made precise in the following lemma.
 
 \begin{lem}
 \label{equivlemma}
 Let $\{\lambda_n\}_{n\in\mathbb Z}\subset \mathbb R$ be a \cis, $F$ its generating function, and $\{c_n\}_{n\in\mathbb Z}$  its critical values. Then 
 \[|F'(\lambda_n )|\asymp |c_n|.\]
 \end{lem}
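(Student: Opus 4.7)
The plan is to show that $|F|\asymp|c_n|$ on a suitable Euclidean circle surrounding $\lambda_n$ and then to transport this estimate to the interior point $\lambda_n$ via a Poisson mean-value argument, at which point it turns into $|F'(\lambda_n)|\asymp|c_n|$.

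As in the proof of Theorem~\ref{charthm}, let $x_n<\lambda_n<x_{n+1}$ be the critical points of $F$ adjacent to $\lambda_n$, let $H_{n,n+1}\subset\mathbb C_-$ be the Euclidean semicircle on diameter $[x_n,x_{n+1}]$, and write $C_n:=H_{n,n+1}\cup H_{n,n+1}^*$ for the full Euclidean circle (with $H_{n,n+1}^*$ the reflection across $\mathbb R$), of centre $a_n:=(x_n+x_{n+1})/2$ and radius $r_n:=(x_{n+1}-x_n)/2$. First I would combine the Gehring--Hayman theorem with the neighbouring-values bound (\ref{nachbarn})---already used in the proof of Theorem~\ref{charthm} to derive ${\rm length}(\varphi(H_{n,n+1}))\leq C_9\sqrt{\pi^2+C_{15}^2}$---to conclude that $\Re\varphi$ stays within $O(1)$ of $\log|c_n|$ along $H_{n,n+1}$, so that $|F(z)|\asymp|c_n|$ there, uniformly in $n$. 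Schwarz reflection ($F$ is real on $\mathbb R$) propagates this estimate to $H_{n,n+1}^*$ and hence to all of $C_n$.

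Next I would verify two uniform geometric facts: the interlacing $\cdots<\lambda_{n-1}<x_n<\lambda_n<x_{n+1}<\lambda_{n+1}<\cdots$ guarantees that $\lambda_n$ is the only zero of $F$ in the closed disc bounded by $C_n$, and Lemma~2 of the proof of Theorem~\ref{charthm} provides the uniform estimates $\min(\lambda_n-x_n,x_{n+1}-\lambda_n)\geq c^*>0$ and $x_{n+1}-x_n\leq\Delta_1$. In particular the distance from $\lambda_n$ to $C_n$, equal to $r_n-|\lambda_n-a_n|=\min(\lambda_n-x_n,x_{n+1}-\lambda_n)$, is bounded below by $c^*$, and $|z-\lambda_n|\leq 2r_n\leq\Delta_1$ for $z\in C_n$.

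Consequently $G(z):=F(z)/(z-\lambda_n)$ is holomorphic and non-vanishing on the closed disc, with $G(\lambda_n)=F'(\lambda_n)$, so $\log|G|$ is harmonic in the open disc and continuous on the closure. The Poisson integral representation expresses $\log|F'(\lambda_n)|=\log|G(\lambda_n)|$ as a weighted mean, with unit-mass kernel, of $\log|G|$ taken over $C_n$. On $C_n$ the two preceding steps give
\[
\log|G(z)|=\log|F(z)|-\log|z-\lambda_n|=\log|c_n|+O(1),
\]
uniformly in $n$, and hence $\log|F'(\lambda_n)|=\log|c_n|+O(1)$, which is precisely $|F'(\lambda_n)|\asymp|c_n|$. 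The only point requiring care is the uniformity in $n$ of the $O(1)$ constants; this hinges entirely on the uniform length bound for $\varphi(H_{n,n+1})$ and the uniform separation bounds for critical points and zeros, both of which are already established in the proof of Theorem~\ref{charthm}, so no genuinely new technical obstacle arises.
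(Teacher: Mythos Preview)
Your argument is correct and is genuinely different from the paper's own proof. The paper proceeds globally: it subtracts $\log|x-\lambda_n|$ from the half-plane Poisson representation (\ref{darst}) of $\log|F|$, lets $x\to\lambda_n$, and obtains an explicit formula for $\log|F'(\lambda_n)|$ in terms of a Blaschke sum and a Poisson integral of $\psi=\log|F(\cdot+\i y)|$; the comparison with $\log|c_n|$ then comes from the BMO estimate (\ref{ungl3}) already established in Lemma~\ref{lem4}. Your route is local: you control $|F|$ on the full circle $C_n$ through the Gehring--Hayman length bound for $\varphi(H_{n,n+1})$ (together with (\ref{nachbarn})), and then apply the mean-value/Poisson inequality to the harmonic function $\log|F(z)/(z-\lambda_n)|$ on the disc bounded by $C_n$. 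The uniform two-sided bound on $|z-\lambda_n|$ for $z\in C_n$, which you need to turn $|F|\asymp|c_n|$ into $|G|\asymp|c_n|$, is exactly what the separation estimates $\min(\lambda_n-x_n,x_{n+1}-\lambda_n)\geq c^*$ and $x_{n+1}-x_n\leq\Delta_1$ (from Lemma~2 in the proof of Theorem~\ref{charthm}, applicable since Theorem~\ref{charthm} guarantees that $\{c_n^2\}$ satisfies (\ref{A2t})) provide. Your argument is more elementary in that it sidesteps the BMO machinery and the global representation formula (\ref{darst}); the paper's approach, on the other hand, is a direct continuation of the estimates already assembled in Lemma~\ref{lem4} and thus requires essentially no new ingredients beyond a limit computation.
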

 \begin{proof}
 The proof  relies on  the same technique as Lemma \ref{lem4}. Subtract $\log |x-\lambda_n|$ on both sides of (\ref{darst}) and  then let  $x\to\lambda_n$. We obtain
 \begin{eqnarray*}
 \log |F'(\lambda_n)|&=&B|y|+\sum_{k\in\mathbb Z\setminus\{n\}}\frac{1}{2}\log \left(1-\frac{4y^2}{(\lambda_k-\lambda_n)^2+4y^2}\right)-\log(2|y|)\\&&+\frac{1}{\pi}\int_{\mathbb R} \frac{|y|}{|\lambda_n-\i y+t|^2}\log|F(t+\i y)|\,dt.
 \end{eqnarray*}
 Using $|\lambda_n-\lambda_k|\geq \delta|n-k|$, where $\delta$ ist the separation constant from (\ref{sep}), we obtain
 \begin{multline}
 \label{ungl1}
 \left| \log|F'(\lambda_n)|-\frac{1}{\pi}\int_{\mathbb R} \frac{|y|}{|\lambda_n-\i y+t|^2}\log|F(t+\i y)|\,dt\right|
 \\\leq B|y|+|\log (2|y|)|-\sum_{k=1}^\infty \log\left(1-\frac{4y^2}{k^2\delta^2+4y^2}\right) =:C_1.\end{multline}
 If $I_{\tilde{x}_n}\subset [\lambda_{n-1},\lambda_{n}]$ is again an interval of length $\delta$ centered at $\tilde{x}_n$, and $\psi(x)=\log |F(x+\i y)|$, a similar computation as in Lemma \ref{lem4} shows
 \begin{equation}
 \label{ungl2}
 \left| \frac{1}{\pi}\int_{\mathbb R} \frac{|y|}{|\lambda_n-\i y+t|^2}\log|F(t+\i y)|\,dt-\psi_{I_{\tilde{x}_n}}\right|  \leq C_2, \qquad n\in\mathbb Z, 
 \end{equation}
 for a positive constant $C_2$ independent of $n$. 
 From the estimate (\ref{ungl3}) in the proof of Lemma \ref{lem4}, (\ref{ungl1}), and (\ref{ungl2}) we get
 \[|\log |F'(\lambda_n)|-\log |c_n||\leq C_{22}+C_1+C_2,\]
hence the quotient  $ |F'(\lambda_n)|/ |c_n|$ lies between certain positive constants for all values of $n$.
   \end{proof}
   
   We remark that it is not necessary for the preceding lemma that the critical value $c_n$  is  assumed in the interval $(\lambda_{n-1}, \lambda_n)$. From (\ref{ersteugl})   follows that $|c_n|\asymp |c_{n+N}|$ for every fixed value of $N$, therefore the critical points  and zeros of $F$ only have to be both in ascending order.

Lemma \ref{equivlemma}  can be used to show a version of Pavlov's theorem involving the values $|F'(\lambda_n)|$. In contrast to condition (iii') in section \ref{sec1}, \emph{every} relatively dense subsequence $\{\lambda_{n_k}\}_{k\in\mathbb Z}$ can be considered, i.e. also the full sequence $\{\lambda_{n}\}_{n\in\mathbb Z}$\footnote{It seems that the arguments in \cite{lyuseip} show the same, only the formulation of the result is weaker.}.
   
   \begin{cor}
   A sequence  $\{\lambda_{n}\}_{n\in\mathbb Z}\subset \mathbb R$ is a {\cis} if and only if conditions {\rm (i), (ii)} of Theorem \ref{pavlov} hold  as well as   
   \begin{enumerate}
 \item[{\rm (iii'')}] For one  (and  then for every) relatively dense subsequence  $\{\lambda_{n_k}\}_{k\in\mathbb Z}$ and the values $d_k:=|F'(\lambda_{n_k})|^2$  the discrete Muckenhoupt condition {\rm (\ref{A2t})} holds. \end{enumerate}
     \end{cor}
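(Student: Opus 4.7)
The plan is to combine Theorem \ref{charthm}, Lemma \ref{equivlemma}, and the Lyubarskii--Seip criterion (iii') recalled in Section \ref{sec1}. Throughout, write $e_n := |F'(\lambda_n)|^2$.

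For the forward direction I would start from a \cis\ $\{\lambda_n\}_{n\in\mathbb Z}$. Conditions (i) and (ii) are immediate from Theorem \ref{pavlov}, and Theorem \ref{charthm} supplies the discrete Muckenhoupt condition (\ref{A2t}) for $\{c_n^2\}_{n\in\mathbb Z}$. Using Lemma \ref{equivlemma}, which gives $e_n \asymp c_n^2$, and noting that scaling both $d_n$ and $d_n^{-1}$ by bounded factors only changes the constant in (\ref{A2t}), I can transfer the condition to the full sequence $\{e_n\}_{n\in\mathbb Z}$. It then remains to pass to an arbitrary relatively dense subsequence $\{\lambda_{n_k}\}_{k\in\mathbb Z}$. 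The separation (\ref{sep}) together with the relative density of the subsequence yields $M>0$ with $n_{k+1}-n_k \leq M$ for all $k$. For any finite set $I'$ of consecutive integers with $k_0=\min I'$ and $k_1=\max I'$, set $J := \{n_{k_0}, n_{k_0}+1, \ldots, n_{k_1}\}$; then $|J| \leq M|I'|$ and $\{n_k : k \in I'\} \subset J$, so positivity of the $e_n$ gives
\[
\sum_{k\in I'} e_{n_k} \sum_{k\in I'} e_{n_k}^{-1} \leq \sum_{n\in J} e_n \sum_{n\in J} e_n^{-1} \leq C|J|^2 \leq CM^2|I'|^2,
\]
establishing (\ref{A2t}) for $\{d_k\}_{k\in\mathbb Z}$ and thus (iii'').

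For the reverse direction I assume (i), (ii), and (iii''). Since (iii'') contains as a special case the existential statement (iii') (pick any relatively dense subsequence, for instance the full sequence itself, which is relatively dense by (\ref{eq2})), the Lyubarskii--Seip criterion (iii') immediately yields that $\{\lambda_n\}_{n\in\mathbb Z}$ is a \cis.

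In both directions the substantive input is Lemma \ref{equivlemma}, which transports the discrete Muckenhoupt condition between the natural sequences $\{c_n^2\}$ and $\{|F'(\lambda_n)|^2\}$; the subsequence step is only the elementary observation that a discrete $A_2$ condition is stable under bounded-gap restriction. Since the genuine obstacle, namely the comparison $|F'(\lambda_n)|\asymp |c_n|$, has already been cleared in Lemma \ref{equivlemma}, the remaining argument is straightforward bookkeeping.
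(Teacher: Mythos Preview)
Your proposal is correct and follows essentially the same route as the paper: for the necessity direction the paper likewise invokes Theorem \ref{charthm} to obtain (\ref{A2t}) for $\{c_n^2\}$, transfers via Lemma \ref{equivlemma}, and then uses the bounded-gap estimate $\delta|n_p-n_q|\le|\lambda_{n_p}-\lambda_{n_q}|\le\Delta_2|p-q|$ to pass to an arbitrary relatively dense subsequence; for sufficiency both you and the paper defer to the Lyubarskii--Seip criterion (iii'). One cosmetic remark: in the reverse direction your parenthetical appeal to (\ref{eq2}) is unnecessary and slightly circular (that bound is stated for \cis s), but since (iii'') already postulates the existence of one relatively dense subsequence, (iii') follows immediately without it.
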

     \begin{proof}
     
     We will not reprove the sufficiency of (i),(ii),(iii'') which is done in \cite{lyuseip}.  We only show the discrete Muckenhoupt condition for the sequence $\{d_k\}_{k\in\mathbb Z}$ if 
     $\{\lambda_{n_k}\}_{k\in\mathbb Z}$ is any relatively dense  subsequence .  We can assume that $n_k<n_{k+1}$ for all $k\in\mathbb Z$. The relative density yields
     \[\Delta_2:=\sup_{k\in \mathbb Z}|\lambda_{n_k}-\lambda_{n_{k+1}}|     <\infty,\]
 and thus 
 \[\delta |n_p-n_q|\leq |\lambda_{n_p}-\lambda_{n_q}|\leq |p-q|\Delta_2, \qquad \forall p,q\in \mathbb Z.\]
 For any finite set $I=\{p,\ldots, q\}$ of consecutive integers,  Theorem \ref{charthm} and Lemma \ref{equivlemma} imply
 \begin{multline*}
 \sum_{k\in I}|F'(\lambda_{n_k})|^2\sum_{k\in I} |F'(\lambda_{n_k})|^{-2}\leq C\sum_{k\in I}c_{n_k}^2\sum_{k\in I}c_{n_k}^{-2} \leq C\sum_{k=n_p}^{n_q} c_{k}^2\sum_{k=n_p}^{n_q}c_{k}^{-2}\\
 \leq C' (n_q-n_p+1)^2\leq C' ((q-p)\frac{\Delta_2}{\delta}+1)^2\leq C' \max \left (\frac{\Delta_2}{\delta},1\right)^2|I|^2.
  \end{multline*}
  \end{proof}

\section{Summary and open questions}
The main result of the this paper is Theorem \ref{charthm} which characterizes {\cis}s. In contrast to known characterizations of such sequences, the separation of the points and the exponential type  of the generating function follow automatically. The relation with conformal mappings makes the application  of distortion theorems possible, and the discrete version (\ref{A2t})  of the Muckenhoupt condition  (\ref{A2}) is easier to verify for concrete applications.  Unfortunately, our approach does not give an independent proof of these characterizations, but is based on a comparison argument with the classical Theorem \ref{pavlov}.  For this reason, one can expect to extend  this argument to known  generalizations of Pavlov's Theorem, for example the one given in \cite{lyuseip} for Paley-Wiener spaces $PW_\pi^p$.

 An entire function is said to be in  $PW_\pi^p,\,1<p<\infty $ if it belongs to $L^p$ on the real line, and a sequence $\{\lambda_n\}_{n\in\mathbb Z}$ is called  \emph{{\cis} for}  $PW_\pi^p$ if for every sequence $\{a_n\}\in l^p(\mathbb Z)$ the interpolation problem (\ref{interpol}) has a unique solution $f\in PW_\pi^p $. It is straightforward to see that a  characterization analogous to Theorem \ref{charthm} of these {\cis}s is valid,  if we impose the discrete Muckenhoupt condition 
\renewcommand{\theequation}{$\tilde{A}_p$}
 \begin{equation}
\sum_{n\in I} d_n \sum_{n\in I} d_n^{-1/(p-1)}\leq C|I|^p
 \end{equation}
on the sequence  $\{ d_n\}_{n\in\mathbb Z}=\{|c_n|^p\}_{n\in\mathbb Z}$. On the other hand, the restriction to real sequences can not so easily be disposed of. As long as the imaginary parts of $\{\lambda\}_{n\in\mathbb Z}$ are bounded we can refer to  Corollary 1 in section 8 of chapter 4 in \cite{young} which asserts that $\{{\rm e}^{\Re\lambda_n \i t}\}_{n\in\mathbb Z}$ is a Riesz basis in $L^2(-\pi, \pi)$ if and only if   $\{{\rm e}^{\lambda_n \i t}\}_{n\in\mathbb Z}$ is so. It would be very interesting to know if a representation by conformal mappings  of the generating function can also be found in the general case, where the reflection principle does not work  immediately. 
\renewcommand{\theequation}{\arabic{equation}} 

\section{Lemmas} 
In this section we give the proofs of some lemmas  that had been postponed in the main text.
\begin{lem}
\label{modullemma}
For the module $s(r_1, r_2)$  of the family of curves connecting the noncircular sides of $S(r_1, r_2)$ defined in {\rm  (\ref{s})} we have {\rm (\ref{b})}.
\end{lem}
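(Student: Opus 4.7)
The plan is to transfer the problem to logarithmic coordinates, identify $s(r_1,r_2)$ with a Dirichlet integral, and then apply Dirichlet's principle with an explicit test function in each of the two dual directions.

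First I would apply the conformal map $z\mapsto\log z$, which is single-valued on $S(r_1,r_2)$ since $|\arg z|<\pi/2$ there, to transform the quadrilateral into
\[\tilde S:=\{(u,v)\in\mathbb R^2 \colon \log r_1<u<\log r_2,\ |v|<\theta(e^u)\},\qquad \theta(r):=\frac{\pi}{2}\left(1-C_4\frac{\log r}{r}\right).\]
The noncircular sides become the curves $v=\pm\theta(e^u)$, the circular sides become vertical segments $u=\log r_i$, and by conformal invariance $s(r_1,r_2)$ equals the corresponding module of $\tilde S$. The classical identification of the module of a quadrilateral with a Dirichlet integral gives $s(r_1,r_2)=\int_{\tilde S}|\nabla h|^2\,du\,dv$, where $h$ is harmonic in $\tilde S$ with $h=0$ on the bottom curve, $h=1$ on the top curve, and $\partial h/\partial n=0$ on the two vertical sides.

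For the upper bound I would invoke Dirichlet's principle with the test function $\phi(u,v):=(v+\theta(e^u))/(2\theta(e^u))$, which is affine in $v$ on each vertical slice and satisfies the Dirichlet boundary data. A short computation yields
\[\int_{\tilde S}|\nabla\phi|^2\,du\,dv=\int_{r_1}^{r_2}\frac{dr}{2r\,\theta(r)}+\int_{r_1}^{r_2}\frac{r\,\theta'(r)^2}{6\,\theta(r)}\,dr.\]
Expanding $1/(2\theta(r))=\pi^{-1}(1+O(\log r/r))$ and using the convergence of $\int_{1}^{\infty}\log r/r^2\,dr$ shows that the first integral equals $\pi^{-1}\log(r_2/r_1)$ plus an error which tends to $0$ as $r_1\to\infty$, uniformly in $r_2\geq r_1$. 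Since $\theta'(r)=O(\log r/r^2)$, the second integral is majorized by a constant times $\int_{r_1}^{\infty}(\log r)^2/r^3\,dr$, which also tends to $0$ uniformly. Hence $s(r_1,r_2)\leq \pi^{-1}\log(r_2/r_1)+o(1)$.

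For the matching lower bound I would use the reciprocal identity $1/s(r_1,r_2)=\int_{\tilde S}|\nabla h^\ast|^2\,du\,dv$ for the dual module, where $h^\ast$ is harmonic with $h^\ast=0$ on the left vertical side, $h^\ast=1$ on the right, and Neumann conditions on top and bottom. The affine test function $\phi^\ast(u,v):=(u-\log r_1)/\log(r_2/r_1)$ is admissible and gives
\[\frac{1}{s(r_1,r_2)}\leq \int_{\tilde S}|\nabla\phi^\ast|^2\,du\,dv=\frac{|\tilde S|}{(\log(r_2/r_1))^2}\leq \frac{\pi}{\log(r_2/r_1)},\]
since $|\tilde S|=\int_{\log r_1}^{\log r_2}2\theta(e^u)\,du<\pi\log(r_2/r_1)$. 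Thus $s(r_1,r_2)\geq \pi^{-1}\log(r_2/r_1)$, and combining the two estimates yields (\ref{b}). The main technical obstacle is the upper bound: the $u$-derivative $\partial_u\phi$ is nonzero precisely because the width $2\theta(e^u)$ depends on $u$, and one has to show that its contribution integrates to a quantity vanishing \emph{uniformly} in $r_2$. The sharp decay $\theta'(r)=O(\log r/r^2)$ inherited from the definition of $S(r_1,r_2)$ is what makes $\int_1^\infty(\log r)^2/r^3\,dr$ convergent and hence controls this error.
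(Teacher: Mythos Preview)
Your argument is correct and follows essentially the same route as the paper: pass to logarithmic coordinates and estimate the module of the resulting curvilinear strip by $\int du/(2\Theta(u))$ plus an error controlled by $\int(\Theta')^2/\Theta\,du$. The only difference is that the paper invokes a ready-made Rodin--Warschawski type inequality (formula (13.4) of \cite{rowa}) for these bounds, whereas you derive them directly from Dirichlet's principle with explicit affine test functions in each variable---your version is more self-contained and even yields a slightly better constant ($1/6$ versus $1$) in the error term, though this is immaterial for the conclusion.
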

\begin{proof}
 The principal value of the logarithm maps $S(r_1, r_2)$ conformally onto
 \[\tilde{S}(r_1, r_2):=\{x+\i y\colon  \log r_1 < x < \log r_2,\; \varphi_2(x)< y < \varphi_1(x)\},\]
where $\varphi_1(x):=\frac{\pi}{2}(1-C_4x{\rm e}^{-x})$ and $\varphi_2(x):=-\varphi_1(x)$, see Figure \ref{picture2}.

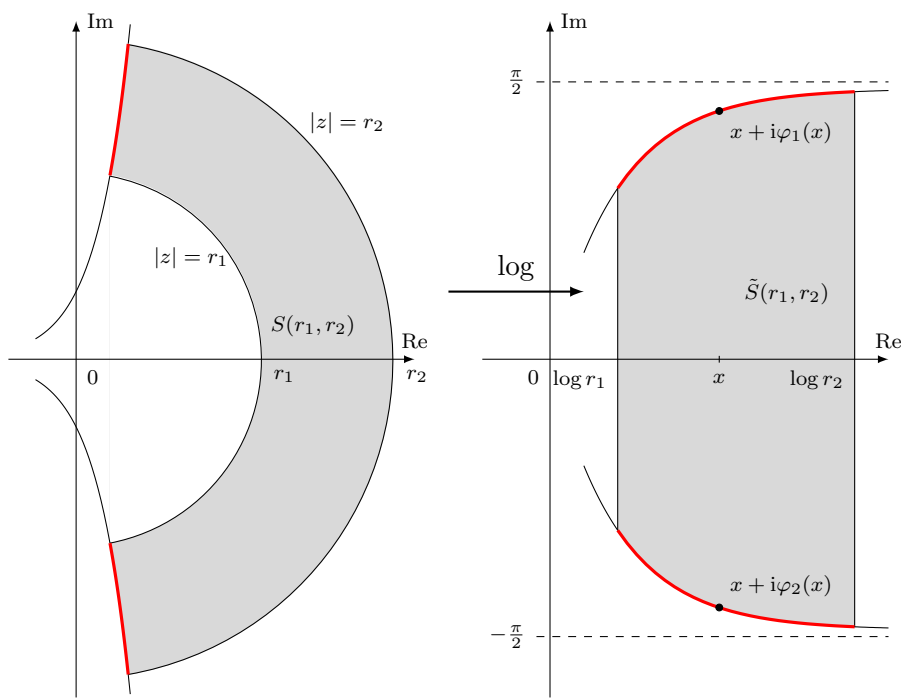
\begin{figure}[h]
\begin{center}

\begin{tikzpicture}[scale=0.9, >=latex]
%links
\filldraw[color=black!15!white] (0.77cm, -4.66cm ) arc (-80.627 : 80.627 :  4.73) (  0.77,   4.66) 
 --(0.6, 3.32)--  ( 0.5,  2.718)--  (0.5, -2.718)--(0.6 , -3.32)--( 0.77, -4.66 );
  \filldraw[white](0.5cm ,2.718cm ) arc (79.577:-79.577:2.763)--cycle;
  \draw (0.77cm,-4.66cm ) arc (-80.627:80.627:4.727 );
\draw (0.5cm, -2.718cm ) arc (-79.577:79.577:2.763 );

\draw[domain=-0.6:0.5] plot (\x,{ exp(2*\x)});
\draw[domain=0.5:0.77, very thick, red] plot (\x,{ exp(2*\x)});
\draw[domain=0.77:0.8] plot (\x,{ exp(2*\x)});
\draw[domain=-0.6:0.5] plot (\x,{ -exp(2*\x)});
\draw[domain=0.5:0.77, very thick, red] plot (\x,{ -exp(2*\x)});
\draw[domain=0.77:0.8] plot (\x,{ -exp(2*\x)});
%Achsen
\draw (-1,0) -- (0,0)node[anchor=north west]{\scriptsize $0$}--
(2.76,0)node[anchor=north west]{\scriptsize $r_1$};
\draw[->](2.76,0)--(4.72,0)node[anchor=north west]{\scriptsize $r_2$}--(5,0)node[anchor=south]{\scriptsize Re};
\draw[->] (0, -5) -- (-0, 5) node[anchor=west]{\scriptsize Im};
%Beschriftung
\node at (3.5,0.5){\scriptsize $S(r_1,r_2)$};
\node at (4,3.5){\scriptsize $|z|=r_2$};
\node at (1.7,1.5){\scriptsize $|z|=r_1$};
%Pfeil
\draw[->, thick] (5.5,1)--(7.5,1)node[midway, above]{\small $\log$};
\begin{scope}[xshift=7cm]

%rechts

\filldraw[domain=1:4.5, color=black!15!white] plot (\x,{4*(1-exp(-\x))})--(4.5, 0)--(1,0)--cycle;
\filldraw[domain=1:4.5, color=black!15!white] plot (\x,{-4*(1-exp(-\x))})--(4.5,0)--(1,0)--cycle;
\draw (1,2.528)--(1,0)node[anchor=north east]{\scriptsize $\log r_1$}--(1,-2.528);
\draw (4.5, 3.96)--(4.5,0)node[anchor=north east ]{\scriptsize $\log r_2$}--(4.5,-3.96);
\draw[dashed] (-0.2,4.1) node[anchor=east]{\scriptsize $\frac{\pi}{2}$} -- (5,4.1);
\draw[dashed] (-0.2,-4.1) node[anchor=east]{\scriptsize $-\frac{\pi}{2}$} -- (5,-4.1);

%Kurven
\draw[domain=0.5:1] plot (\x,{4*(1-exp(-\x))});
\draw[domain=1:4.5, very thick, red] plot (\x,{-4*(1-exp(-\x))});
\draw[domain=4.5:5] plot (\x,{4*(1-exp(-\x))});
\draw[domain=0.5:1] plot (\x,{-4*(1-exp(-\x))});
\draw[domain=1:4.5, very thick, red] plot (\x,{4*(1-exp(-\x))});
\draw[domain=4.5:5] plot (\x,{-4*(1-exp(-\x))});
%Punkte
\draw (2.5,-0.05)node[anchor=north]{\scriptsize $x$}--(2.5,0.05);
\draw[fill, color=black]   (2.5,3.67) circle (0.05cm) node[anchor=north west] {\scriptsize $x+\i\varphi_1 (x)$};
\draw[fill, color=black]   (2.5,-3.67) circle (0.05cm) node[anchor=south west] {\scriptsize $x+\i\varphi_2 (x)$};
\node at (3.5,1) {\scriptsize $\tilde{S}(r_1, r_2)$};
%Achsen
\draw[->]  (-1,0) -- (0,0)node[anchor=north east]{\scriptsize $0$}--(5,0) node[anchor=south]{\scriptsize Re};
\draw[->] (0, -5) -- (-0, 5) node[anchor=west]{\scriptsize Im};
\end{scope}

\end{tikzpicture}

\caption{Conformal mapping of $S(r_1, r_2)$ onto $\tilde{S}(r_1, r_2)$}
\label{picture2} 
\end{center}
\end{figure}

  By the conformal invariance of the module we know that
\begin{equation}
\label{eq6}
s(r_1, r_2)= \tilde{s}(r_1, r_2),
\end{equation}
where $\tilde{s}(r_1, r_2)$ is the module of the family of   all curves joining $\{x+\i\varphi_1(x)\colon \log r_1\leq x\leq \log r_2\}$ and $\{x+\i \varphi_2(x)\colon \log r_1\leq x \leq  \log r_2\}$ in $\tilde{S}(r_1, r_2)$. 
Denoting $\vartheta(x):=\varphi_1(x)-\varphi_2(x)=\pi(1-C_4x{\rm e}^{-x})$ and applying formula (13.4) from \cite{rowa} we get the estimate
\begin{equation}
\label{eq3}
\int\limits_{\log r_1}^{\log r_2} \frac{dx}{\vartheta (x)}\leq \tilde{s}(r_1, r_2)\leq \int\limits_{\log r_1}^{\log r_2} \frac{dx}{\vartheta (x)}+ R(r_1, r_2),
\end{equation} 
where
\begin{equation}
\label{r}
0\leq R(r_1, r_2)\leq \int\limits_{\log r_1}^{\log r_2} \frac{\varphi'_1(x)^2+\varphi'_2(x)^2}{\vartheta (x)}dx.
\end{equation}
We compute
\begin{equation}
\label{eq4}
\left |\, \int\limits_{\log r_1}^{\log r_2} \frac{dx}{\vartheta (x)}-\frac{1}{\pi}\log\frac{r_2}{r_1}\right|=\left|\frac{1}{\pi}
\int\limits_{\log r_1}^{\log r_2} \frac{dx}{1-C_4x{\rm e}^{-x}} - \frac{1}{\pi}
\int\limits_{\log r_1}^{\log r_2} dx\right|= \frac{1}{\pi}
\int\limits_{\log r_1}^{\log r_2} \frac{C_4x{\rm e}^{-x}}{1-C_4x{\rm e}^{-x}}dx\to 0
 \end{equation}
as $r_1, r_2\to \infty$. On the other hand, (\ref{r}) yields 
\begin{equation}
\label{eq5}
R(r_1, r_2)\to 0 \qquad \mbox{ as  $r_1, r_2\to \infty,$}
\end{equation}
since $\displaystyle  \int\limits_{x_0}^{\infty } \frac{\varphi'_1(x)^2+\varphi'_2(x)^2}{\vartheta (x)}dx$ is readily seen to converge.  Now (\ref{eq6}), (\ref{eq3}), (\ref{eq4}) and (\ref{eq5}) imply the assertion (\ref{b}). 
\end{proof}

Let $\mod(E_1, E_2)$ denote the module of the family of curves connecting two sets $E_1, E_2$ in $\mathbb C_-$. Then we have the following lemma.
\begin{lem}
\label{modullemma2}
For every $\varepsilon>0$ there is $\delta\in (0,1)$ with the property that for all $-1<a<b<1$ with $\varepsilon <\mod([a,b],[1,\infty))<1/\varepsilon$ and $\varepsilon <\mod([a,b],(-\infty, -1])<1/\varepsilon$ we have $|a|,|b|\leq\delta$.
\end{lem}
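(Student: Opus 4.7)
My strategy is to use conformal (Möbius) invariance of the modulus to reduce both quantities to values of a single monotone one-parameter function, and then to combine the resulting four inequalities linearly to force $a,b$ into a compact subset of $(-1,1)$.

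First, I would apply the real-line-preserving automorphism $z\mapsto(z-a)/(1-a)$ of $\mathbb C_-$, which sends the quadruple $(a,b,1,\infty)$ to $(0,t,1,\infty)$ with $t:=(b-a)/(1-a)\in(0,1)$; conformal invariance then gives
\[
\mod([a,b],[1,\infty))=M(t),
\]
where $M\colon(0,1)\to(0,\infty)$ is a fixed function, continuous and strictly increasing, with $M(0^+)=0$ and $M(1^-)=\infty$. These are classical properties of the modulus of a quadrilateral and can be obtained by a Teichm\"uller-type comparison exactly as in Lemma~\ref{modullemma}. Applying next the anticonformal involution $z\mapsto-\bar z$, which fixes $\mathbb C_-$ setwise, preserves modules, and sends $[a,b]\mapsto[-b,-a]$ together with $(-\infty,-1]\mapsto[1,\infty)$, one obtains
\[
\mod([a,b],(-\infty,-1])=M(s),\qquad s:=(b-a)/(1+b)\in(0,1).
\]

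Writing $t_1:=M^{-1}(\varepsilon)$ and $t_2:=M^{-1}(1/\varepsilon)$, both strictly between $0$ and $1$, the hypotheses translate into $t,s\in(t_1,t_2)$. Combining the upper bound $b-a<t_2(1-a)$ (from $t<t_2$) with the lower bound $a<b-t_1(1+b)$ (from $s>t_1$) and eliminating $a$ produces a linear inequality in $b$ alone, whose solution is
\[
b<\delta:=\frac{t_2-t_1+t_1t_2}{t_1+t_2-t_1t_2},
\]
and one checks that $\delta<1$ precisely because $t_2<1$. By the symmetric combination of $s<t_2$ and $t>t_1$ one obtains $a>-\delta$, and since $\delta$ depends only on $\varepsilon$ this finishes the argument.

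The only nonroutine point is the boundary behavior of $M$: namely $M(t)\to 0$ as $t\to 0$ and $M(t)\to\infty$ as $t\to 1$. Both can be obtained by standard estimates for the modulus of a quadrilateral---an upper bound by a thin sector in the first case, a lower bound via the Gr\"otzsch principle in the second---entirely analogous to the reasoning already used in Lemma~\ref{modullemma}, so no essentially new technique is required.
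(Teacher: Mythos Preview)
Your argument is correct and follows essentially the same route as the paper's own proof. Both reduce the two moduli, via real M\"obius maps of $\mathbb C_-$, to a single monotone one-parameter function---the paper uses Ahlfors' extremal distance $\Lambda(R)$ with parameters $R_1=(1-b)/(b-a)$ and $R_2=(1+a)/(b-a)$, while you use $M(t)$ with $t=(b-a)/(1-a)$ and $s=(b-a)/(1+b)$; these are related by $t=1/(1+R_1)$, $s=1/(1+R_2)$---and then extract explicit bounds on $a$ and $b$ by elementary algebra (the paper adds the two inequalities, you substitute one into the other, but the outcome is the same).
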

\begin{proof}
As in \cite{ahl} we denote by $\Lambda(R)$ the extremal distance of $[-1,0]$ from $[R,\infty )$ in $\mathbb C$. The module of all curves joining $[-1,0]$ and $[R,\infty)$ in $\mathbb C_-$ is then $1/(2\Lambda(R))$, and we get
\begin{equation*}
\mod([a,b],[1,\infty))=\frac{1}{\displaystyle 2\Lambda\left(\frac{1-b}{b-a}\right)},\qquad
\mod([a,b],(-\infty,-1])=\frac{1}{\displaystyle 2\Lambda\left(\frac{1+a}{b-a}\right)}.
\end{equation*}
Since  $\Lambda(R)$ is monotonical, tends to $+\infty$ as $R\to+\infty$, and to $0$ as $R\to 0$,  our assumptions imply that there are constants $C, c>0$ depending only on $\varepsilon$  such that 
\[c\leq \frac{1-b}{b-a}\leq C, \qquad c\leq \frac{1+a}{b-a}\leq C.\]
Adding both  inequalities yields after elementary manipulations
\[ b-a\geq \frac{2}{2C+1}.\]
Hence
\[1-b\geq \frac{2c}{2C+1}, \qquad a+1\geq\frac{2c}{2C+1},\]
and the assertion is true if we put $\delta:=1-(2c)/(2C+1)$. 
\end{proof}

 \begin{lem}
 \label{ungl}
 For real numbers $p,q,\alpha$ with  $p,q>0$ and $-\frac{1}{2}\leq\alpha\leq\frac{1}{2}$ holds
 \[2pq\leq p^{1+2\alpha}q^{1-2\alpha}+p^{1-2\alpha}{q^{1+2\alpha}}\leq p^2+q^2.\] 
 \end{lem}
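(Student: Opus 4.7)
The plan is to verify both inequalities by elementary manipulations. Set $\beta := 2\alpha \in [-1,1]$ for brevity, so that the middle expression is $p^{1+\beta}q^{1-\beta}+p^{1-\beta}q^{1+\beta}$.

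For the left inequality I would simply apply the arithmetic-geometric mean inequality to the two positive terms in the middle:
\[
p^{1+\beta}q^{1-\beta}+p^{1-\beta}q^{1+\beta}\ge 2\sqrt{p^{1+\beta}q^{1-\beta}\cdot p^{1-\beta}q^{1+\beta}}=2\sqrt{p^{2}q^{2}}=2pq,
\]
which uses only $p,q>0$ and is valid for every real $\alpha$.

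For the right inequality the cleanest approach is to factor the difference. A direct computation gives the identity
\[
p^{2}+q^{2}-p^{1+\beta}q^{1-\beta}-p^{1-\beta}q^{1+\beta}=\bigl(p^{1+\beta}-q^{1+\beta}\bigr)\bigl(p^{1-\beta}-q^{1-\beta}\bigr),
\]
as one checks by expanding the right-hand side. Because $-1\le\beta\le 1$, both exponents $1+\beta$ and $1-\beta$ are nonnegative, so the maps $x\mapsto x^{1+\beta}$ and $x\mapsto x^{1-\beta}$ are monotone nondecreasing on $(0,\infty)$. Hence the two factors $p^{1+\beta}-q^{1+\beta}$ and $p^{1-\beta}-q^{1-\beta}$ have the same sign (both are $\ge 0$ if $p\ge q$ and both $\le 0$ if $p\le q$), so their product is nonnegative, giving exactly $p^{2}+q^{2}\ge p^{1+\beta}q^{1-\beta}+p^{1-\beta}q^{1+\beta}$.

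There is no real obstacle here; the only delicate point is ensuring the exponents $1\pm 2\alpha$ remain nonnegative, which is precisely guaranteed by the hypothesis $-\tfrac12\le\alpha\le\tfrac12$ and is what makes the monotonicity argument work. (As an alternative to the factorization, one could instead apply weighted AM--GM with weights $\tfrac{1\pm 2\alpha}{2}\ge 0$ to obtain $p^{1+2\alpha}q^{1-2\alpha}\le\tfrac{1+2\alpha}{2}p^{2}+\tfrac{1-2\alpha}{2}q^{2}$ and $p^{1-2\alpha}q^{1+2\alpha}\le\tfrac{1-2\alpha}{2}p^{2}+\tfrac{1+2\alpha}{2}q^{2}$, and add; the weights are nonnegative precisely in the given range of $\alpha$.)
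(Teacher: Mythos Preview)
Your proof is correct. The left inequality via AM--GM is exactly the alternative the paper mentions; for the right inequality the paper's primary argument studies $g(\alpha)=p^{1+2\alpha}q^{1-2\alpha}+p^{1-2\alpha}q^{1+2\alpha}$ as a function of $\alpha$, noting $g(0)=2pq$, $g(\tfrac12)=p^2+q^2$, $g(-\alpha)=g(\alpha)$, and that $g$ is increasing on $[0,\tfrac12]$, while also citing the rearrangement inequality as an alternative. Your factorization $(p^{1+\beta}-q^{1+\beta})(p^{1-\beta}-q^{1-\beta})\ge 0$ is essentially the two-term case of that rearrangement argument made explicit, and your weighted AM--GM variant is another clean route; all of these rely on $1\pm 2\alpha\ge 0$, which is exactly where the hypothesis on $\alpha$ enters.
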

 \begin{proof}
We fix $p$ and $q$ and consider the function $g(\alpha):=p^{1+2\alpha}q^{1-2\alpha}+p^{1-2\alpha}{q^{1+2\alpha}}$. We have $g(0)=2pq$,  $g(\frac{1}{2})=p^2+q^2$, and computation of $g'(\alpha)$ shows that $g(\alpha)$ grows monotonically on $[0,\frac{1}{2}]$. For negative $\alpha$ the result follows from $g(\alpha)=g(-\alpha)$. 
 
 An elementary proof can be given using the  arithmetic-geometric means inequality for the left estimate and the rearrangement inequality \cite{eng} for the right estimate.
    \end{proof}

\vspace{2ex}
\noindent {\bf Acknowledgement.} I thank  A.~E. Eremenko for a discussion of \cite{eresod} and  many useful suggestions.  MATLAB and the Schwarz-Christoffel toolbox by T. Driscoll \cite{dri} were helpful  in  the creation of the figures. 

\bibliographystyle{amsplain}
\bibliography{references.bib}

\end{document}